\documentclass[final,onefignum,onetabnum]{siamart220329}
\usepackage{braket,amsfonts}

\usepackage{array}

\usepackage[caption=false]{subfig}


\newsiamthm{claim}{Claim}
\newsiamremark{rmk}{Remark}
\newsiamthm{thm}{Theorem}
\newsiamthm{df}{Definition}
\newsiamthm{prf}{Proof} 
\newsiamthm{ex}{Example}
\newsiamthm{prop}{Proposition}
\newsiamthm{lm}{Lemma}
\newsiamthm{cor}{Corollary}
\usepackage{algorithmic}
\usepackage{graphicx,epstopdf}

\Crefname{ALC@unique}{Line}{Lines}

\usepackage{amsopn}

\usepackage{xspace}
\usepackage{bold-extra}
\usepackage[most]{tcolorbox}

\colorlet{texcscolor}{blue!50!black}
\colorlet{texemcolor}{red!70!black}
\colorlet{texpreamble}{red!70!black}
\colorlet{codebackground}{black!25!white!25}


\lstdefinestyle{siamlatex}{%
  style=tcblatex,
  texcsstyle=*\color{texcscolor},
  texcsstyle=[2]\color{texemcolor},
  keywordstyle=[2]\color{texemcolor},
  moretexcs={cref,Cref,maketitle,mathcal,text,headers,email,url},
}

\tcbset{%
  colframe=black!75!white!75,
  coltitle=white,
  colback=codebackground, 
  colbacklower=white, 
  fonttitle=\bfseries,
  arc=0pt,outer arc=0pt,
  top=1pt,bottom=1pt,left=1mm,right=1mm,middle=1mm,boxsep=1mm,
  leftrule=0.3mm,rightrule=0.3mm,toprule=0.3mm,bottomrule=0.3mm,
  listing options={style=siamlatex}
}

\newtcblisting[use counter=example]{example}[2][]{%
  title={Example~\thetcbcounter: #2},#1}

\newtcbinputlisting[use counter=example]{\examplefile}[3][]{%
  title={Example~\thetcbcounter: #2},listing file={#3},#1}

\DeclareTotalTCBox{\code}{ v O{} }
{ 
  fontupper=\ttfamily\color{black},
  nobeforeafter,
  tcbox raise base,
  colback=codebackground,colframe=white,
  top=0pt,bottom=0pt,left=0mm,right=0mm,
  leftrule=0pt,rightrule=0pt,toprule=0mm,bottomrule=0mm,
  boxsep=0.5mm,
  #2}{#1}

\patchcmd\newpage{\vfil}{}{}{}
\flushbottom



\begin{tcbverbatimwrite}{tmp_\jobname_header.tex}
\title{Null Controllability for Stochastic Parabolic Equations with convection terms}
\author{Mahmoud Baroun\thanks{Cadi Ayyad University, Faculty of Sciences Semlalia, LMDP, UMMISCO (IRD-UPMC), P.B. 2390, Marrakesh, Morocco (\email{m.baroun@uca.ac.ma}, \email{abdellatif.elgrou@ced.uca.ma}, \email{maniar@uca.ma}).}
\and Said Boulite\thanks{Cadi Ayyad University, National School of Applied Sciences, LMDP, UMMISCO (IRD-UPMC), P.B. 575, Marrakesh, Morocco (\email{s.boulite@uca.ma}).}
\and Abdellatif Elgrou\footnotemark[1]
\and Lahcen Maniar\footnotemark[1]}
\headers{Carleman, Stochastic parabolic equations, And Controllability}{M. Baroun, S. Boulite, A. Elgrou, and L. Maniar}

\end{tcbverbatimwrite}
\input{tmp_\jobname_header.tex}

\ifpdf
\hypersetup{ pdftitle={Carleman and null controllability for stoch parab eq \LaTeX\ Style} }
\fi
\usepackage{amssymb}
\usepackage{bbm}
\usepackage{hyperref}
\usepackage{amsmath}


\begin{document}
\maketitle

\begin{tcbverbatimwrite}{tmp_\jobname_abstract.tex}
\begin{abstract}
This paper addresses null
controllability for both forward and backward linear stochastic parabolic equations by introducing convection terms on the drift parts with bounded coefficients. Moreover, the forward stochastic parabolic equation includes a convection term on the diffusion part. The null controllability results rely on novel Carleman estimates for both backward and forward stochastic parabolic equations, encompassing a divergence source term interpreted in the weak sense. These Carleman estimates are established through the application of the duality technique. In doing so, we resolve some previously unanswered questions (see Remarks 2.1-2.2 in [S. Tang, and X. Zhang, SIAM J. Control Optim., 48 (2009), p.p 2191-2216]). Additionally, we present a more accurate estimation of the null-control costs.
\end{abstract}

\begin{keywords}
stochastic parabolic equations, global Carleman estimates, observability, null controllability
\end{keywords}

\begin{AMS}
60H15, 93B05, 93B07, 93E20
\end{AMS}

\end{tcbverbatimwrite}
\input{tmp_\jobname_abstract.tex}

\section{Introduction and main results}
Let $T>0$, $G\subset\mathbb{R}^N$ (with $N\geq1$) be a given nonempty bounded domain with smooth boundary $\Gamma=\partial G$, and let $G_0\Subset G$ be a given non-empty open subset which is strictly contained in $G$ (i.e., $\overline{G_0}\subset G$) and  $\overline{G}$ denotes the closure of $G$.  Put $Q=(0,T)\times G, \,\,\, \Sigma=(0,T)\times\Gamma,\,\, \textnormal{and}\,\,\, Q_0=(0,T)\times G_0$. Also, we indicate by $\mathbbm{1}_{G_0}$ the characteristic function of $G_0$. 

Let $(\Omega,\mathcal{F},\{\mathcal{F}_t\}_{t\in[0,T]},\mathbb{P})$ be a fixed complete filtered probability space on which a one-dimensional standard Brownian motion $W(\cdot)$ is defined such that $\{\mathcal{F}_t\}_{t\in[0,T]}$ is the natural filtration generated by $W(\cdot)$ and augmented by all the $\mathbb{P}$-null sets in $\mathcal{F}$. Let $\mathcal{H}$ be a Banach space, let $C([0,T];\mathcal{H})$ be the Banach space of all $\mathcal{H}$-valued continuous functions defined on $[0,T]$; and for some sub-sigma algebra $\mathcal{G}\subset\mathcal{F}$, we denote by $L^2_{\mathcal{G}}(\Omega;\mathcal{H})$ the Banach space of all $\mathcal{H}$-valued $\mathcal{G}$-measurable random variables $X$ such that $\mathbb{E}\big(\vert X\vert_\mathcal{H}^2\big)<\infty$, with the canonical norm; by $L^2_\mathcal{F}(0,T;\mathcal{H})$ the Banach space consisting of all $\mathcal{H}$-valued $\{\mathcal{F}_t\}_{t\in[0,T]}$-adapted processes $X(\cdot)$ such that $\mathbb{E}\big(\vert X(\cdot)\vert^2_{L^2(0,T;\mathcal{H})}\big)<\infty$, with the canonical norm; by $L^\infty_\mathcal{F}(0,T;\mathcal{H})$ the Banach space consisting of all $\mathcal{H}$-valued $\{\mathcal{F}_t\}_{t\in[0,T]}$-adapted essentially bounded processes and its norm is denoted simply by $|\cdot|_\infty$; and by $L^2_\mathcal{F}(\Omega;C([0,T];\mathcal{H}))$ the Banach space consisting of all $\mathcal{H}$-valued $\{\mathcal{F}_t\}_{t\in[0,T]}$-adapted continuous processes $X(\cdot)$ such that $\mathbb{E}\big(\vert X(\cdot)\vert^2_{C([0,T];\mathcal{H})}\big)<\infty$, with the canonical norm. Similarly, one can define $L^\infty_\mathcal{F}(\Omega;C^m([0,T];\mathcal{H}))$ for any positive integer $m$.

The first purpose of this paper is to study the null controllability of the following forward linear stochastic parabolic equation
\begin{equation}\label{1.1}
\begin{cases}
\begin{array}{ll}
dy - \nabla\cdot(A\nabla y) \,dt = (a_1 y+B_1\cdot\nabla y +\mathbbm{1}_{G_0}u) \,dt \\
\hspace{3.1cm}\;+ (a_2y+B_2\cdot\nabla y+v) \,dW(t) &\textnormal{in}\,\,Q,\\
				y=0 &\textnormal{on}\,\,\Sigma,\\
				y(0)=y_0 &\textnormal{in}\,\,G,
			\end{array}
		\end{cases}
\end{equation}
where $a_1, a_2\in L_\mathcal{F}^\infty(0, T; L^\infty(G))$, $B_1, B_2\in L_\mathcal{F}^\infty(0, T;L^\infty(G;\mathbb{R}^N))$, $y_0\in L^2_{\mathcal{F}_0}(\Omega;L^2(G))$ is the initial state and the control function consists of the pair 
$$(u,v)\in L^2_\mathcal{F}(0,T;L^2(G_0))\times L^2_\mathcal{F}(0,T;L^2(G)).$$ 
For $\nabla$ (resp., $\nabla\cdot$) represents gradient (resp., divergence). The term $\nabla\cdot(A\nabla y)$ (resp., $B_i\cdot\nabla y$, $i=1,2$) describes diffusion (resp., convection terms). We assume that the diffusion matrix $A=(a^{jk})_{j,k=1,2,...,N}$ satisfies
 \begin{enumerate}
\item $A\in L^\infty_\mathcal{F}(\Omega;C^1([0,T];W^{2,\infty}(G;\mathbb{R}^{N\times N})))$, and $a^{jk}=a^{kj}$\,\, for all \,\,$j,k=1,2,...,N$.
\item There exists a constant $\beta>0$ such that
    $$( A(\omega,t,x)\xi,\xi)_{\mathbb{R}^N} \geq \beta\vert\xi\vert^2,\qquad (\omega,t,x,\xi)\in\Omega\times Q\times\mathbb{R}^N.$$
\end{enumerate}
Here and in the sequel of this paper, we denote by ``\,$\cdot$\,'' or $(\cdot,\cdot)_{\mathbb{R}^N}$ the canonical inner product in $\mathbb{R}^N$, and $C$ stands for a positive constant depending only on $G$, $G_0$, and $A$, which may change from one place to another.

From \cite{krylov1994}, it is well-known that \eqref{1.1} is well-posed i.e., for any 
$y_0\in L^2_{\mathcal{F}_0}(\Omega;L^2(G))$ and $(u,v)\in L^2_\mathcal{F}(0,T;L^2(G_0))\times L^2_\mathcal{F}(0,T;L^2(G))$, the equation \eqref{1.1} has a unique weak solution 
$$y\in L^2_\mathcal{F}(\Omega;C([0,T];L^2(G)))\bigcap L^2_\mathcal{F}(0,T;H^1_0(G)),$$
that depends continuously on $y_0$, $u$ and $v$.

The first main result of this paper is the following null controllability of \eqref{1.1}.
\begin{thm}\label{thm01.1}
For any given $T>0$, $G_0\Subset G$ a nonempty open subset of $G$, and for all $y_0\in L^2_{\mathcal{F}_0}(\Omega;L^2(G))$, there exist controls $(\hat{u},\hat{v})\in L^2_\mathcal{F}(0,T;L^2(G_0))\times L^2_\mathcal{F}(0,T;L^2(G))$ such that the corresponding solution $\hat{y}$ to \eqref{1.1} satisfies  $$\hat{y}(T,\cdot) = 0\,\,\, \textnormal{in}\,\,G,\,\,\mathbb{P}\textnormal{-a.s.}$$
Moreover, controls $\hat{u}$ and $\hat{v}$ can be chosen in such a way that
\begin{align}\label{1.2201}
\begin{aligned}
\vert \hat{u}\vert^2_{L^2_\mathcal{F}(0,T;L^2(G))}+\vert \hat{v}\vert^2_{ L^2_\mathcal{F}(0,T;L^2(G))}\leq e^{CK}\,\vert y_0\vert^2_{L^2_{\mathcal{F}_0}(\Omega;L^2(G))},
\end{aligned}
\end{align}
where $K=K(T,\vert a_1\vert_\infty,\vert a_2\vert_\infty,\vert B_1\vert_\infty,\vert B_2\vert_\infty)$ has the following form
\begin{align*}
K\equiv1+\frac{1}{T}+\vert a_1\vert_\infty^{2/3}+T\vert a_1\vert_\infty+\vert a_2\vert_\infty^{2/3}+T\vert a_2\vert_\infty^{2}+(1+T)\vert B_1\vert_\infty^2+\vert B_2\vert_\infty^2.
\end{align*}
\end{thm}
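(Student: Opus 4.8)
The plan is to derive the null controllability of \eqref{1.1} from an observability estimate for the associated adjoint system by the classical duality/penalization argument, with the observability inequality coming from a global Carleman estimate for backward stochastic parabolic equations (the convection terms being absorbed into the zero-order and first-order terms thanks to the boundedness of $a_1,a_2,B_1,B_2$).

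\textbf{Step 1: Identify the adjoint system.} First I would write down the backward stochastic parabolic equation dual to \eqref{1.1}. If $(z,Z)$ denotes the solution pair of
\begin{equation*}
\begin{cases}
\begin{array}{ll}
dz + \nabla\cdot(A\nabla z)\,dt = \big(-a_1 z + \nabla\cdot(B_1 z) - a_2 Z + \nabla\cdot(B_2 Z)\big)\,dt + Z\,dW(t) & \textnormal{in } Q,\\
z = 0 & \textnormal{on } \Sigma,\\
z(T) = z_T & \textnormal{in } G,
\end{array}
\end{cases}
\end{equation*}
then Itô's formula applied to $\langle y,z\rangle_{L^2(G)}$ and integration over $(0,T)$ yields the duality identity relating $\mathbb{E}\langle y(T),z_T\rangle$, $\mathbb{E}\langle y_0,z(0)\rangle$, and the control pairings $\mathbb{E}\int_0^T\!\!\int_{G_0} uz + \mathbb{E}\int_0^T\!\!\int_G vZ$. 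Note that the convection term $B_2\cdot\nabla y$ in the diffusion part of \eqref{1.1} is exactly what produces the divergence source term $\nabla\cdot(B_2 Z)$ in the drift of the adjoint equation, interpreted in the weak sense — this is precisely the setting for which the paper's Carleman estimate (stated earlier in the excerpt) was designed.

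\textbf{Step 2: Observability via Carleman.} I would then invoke the global Carleman estimate for the backward equation to obtain the observability inequality
\begin{equation*}
\vert z(0)\vert^2_{L^2_{\mathcal{F}_0}(\Omega;L^2(G))} \leq e^{CK}\Big(\mathbb{E}\!\int_0^T\!\!\int_{G_0} z^2\,dx\,dt + \mathbb{E}\!\int_0^T\!\!\int_G Z^2\,dx\,dt\Big),
\end{equation*}
where the exponential constant $e^{CK}$ with the stated form of $K$ comes from optimizing the Carleman parameters ($\lambda$ and $\mu$, or $s$) against the $L^\infty$-norms of the coefficients: the zero-order terms $a_1,a_2$ and the first-order/convection terms $B_1,B_2$ each contribute a specific power of their norms to $K$, which is where the terms $\vert a_1\vert_\infty^{2/3}$, $T\vert a_1\vert_\infty$, $\vert a_2\vert_\infty^{2/3}$, $T\vert a_2\vert_\infty^2$, $(1+T)\vert B_1\vert_\infty^2$, $\vert B_2\vert_\infty^2$, and the singular-in-$T$ term $1+1/T$ all originate. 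The different exponents on $a_2$ versus $a_1$ (quadratic-in-$T$ rather than linear) reflect that $a_2$ enters the stochastic integral and is handled via a Burkholder–Davis–Gundy / Cauchy–Schwarz argument that squares its contribution.

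\textbf{Step 3: Construct the controls by penalization.} With observability in hand, I would fix $y_0$ and, for $\varepsilon>0$, minimize the functional $J_\varepsilon(u,v) = \tfrac12\big(\vert u\vert^2_{L^2_\mathcal{F}(0,T;L^2(G_0))} + \vert v\vert^2_{L^2_\mathcal{F}(0,T;L^2(G))}\big) + \tfrac1{2\varepsilon}\vert y(T)\vert^2_{L^2_{\mathcal{F}_T}(\Omega;L^2(G))}$ over admissible controls, where $y$ solves \eqref{1.1} with data $(y_0,u,v)$. Strict convexity and coercivity give a unique minimizer $(u_\varepsilon,v_\varepsilon)$; writing the Euler–Lagrange/optimality system identifies $(u_\varepsilon,v_\varepsilon)$ in terms of the adjoint state with terminal datum $z_T = -\tfrac1\varepsilon y_\varepsilon(T)$, and the duality identity combined with the observability inequality furnishes the uniform bound $\vert u_\varepsilon\vert^2 + \vert v_\varepsilon\vert^2 \leq e^{CK}\vert y_0\vert^2$ (and $\vert y_\varepsilon(T)\vert^2 \leq \varepsilon\, e^{CK}\vert y_0\vert^2$). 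Passing to the weak limit $\varepsilon\to 0$ along a subsequence yields controls $(\hat u,\hat v)$ with $\hat y(T)=0$ and the cost estimate \eqref{1.2201}; weak lower semicontinuity of the norm preserves the bound. The final bookkeeping step is to verify that the constant $e^{CK}$ produced by this limit procedure is exactly the one coming from the Carleman estimate, which is where I expect the most care to be needed — tracking the precise dependence on $T$ and on the coefficient norms through the duality pairing, especially the two distinct contributions of $a_2$ and of $B_2$ (the latter appearing only through the $v$-pairing, hence with the milder weight $\vert B_2\vert_\infty^2$ without a $T$-factor). The genuine mathematical obstacle, however, lies upstream in the Carleman estimate itself — handling the divergence source term $\nabla\cdot(B_2 Z)$ in the weak sense and absorbing it, together with the convection terms, into the left-hand side of the estimate — but that is assumed as a prior result here, so the proof of this theorem is the relatively standard duality argument outlined above.
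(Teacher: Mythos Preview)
Your proposal is correct and follows essentially the same route as the paper: the observability inequality \eqref{1.04} for the adjoint backward equation \eqref{1.3} (your Step~2, proved in the paper as Theorem~\ref{thm4.1} from the Carleman estimate \eqref{1.014}) is combined with the penalized optimal control problem, the optimality system $u_\varepsilon=\mathbbm{1}_{G_0}z_\varepsilon$, $v_\varepsilon=Z_\varepsilon$, the duality identity, Young's inequality, and a weak-limit passage, exactly as you outline in Step~3. One small remark: the $T\vert a_2\vert_\infty^2$ contribution to $K$ does not come from a Burkholder--Davis--Gundy argument but rather from the energy/Gronwall estimate \eqref{2.42012}--\eqref{2.45012} for the backward equation, where $a_2$ enters the drift through $-a_2 Z$ and Young's inequality produces the square.
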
 

The second purpose of this paper is to establish the null controllability of the following backward linear stochastic parabolic equation
	\begin{equation}\label{1.2}
		\begin{cases}
			\begin{array}{ll}
			dy+\nabla\cdot(A\nabla y) dt = (a_1y+B\cdot\nabla y+a_2Y+\mathbbm{1}_{G_0}u)dt + Y dW(t)	 &\textnormal{in}\,\,Q,\\
			y=0 &\textnormal{on}\,\,\Sigma,\\
			y(T)=y_T &\textnormal{in}\,\,G,
			\end{array}
		\end{cases}
	\end{equation}
where $A$ is as previously, $a_1, a_2\in L^\infty_\mathcal{F}(0,T;L^\infty(G))$, $B\in L^\infty_\mathcal{F}(0,T;L^\infty(G;\mathbb{R}^N))$, $y_T\in L^2_{\mathcal{F}_T}(\Omega;L^2(G))$ is the terminal state and $u\in L^2_\mathcal{F}(0,T;L^2(G_0))$ is the control process.

According to \cite{zhou1992}, equation \eqref{1.2} is well-posed i.e., for any $y_T\in L^2_{\mathcal{F}_T}(\Omega;L^2(G))$ and $u\in L^2_\mathcal{F}(0,T;L^2(G_0))$, there exists a unique weak solution $(y,Y)$ of \eqref{1.2} such that
$$(y,Y)\in \Big(L^2_\mathcal{F}(\Omega;C([0,T];L^2(G)))\bigcap L^2_\mathcal{F}(0,T;H^1_0(G))\Big)\times L^2_\mathcal{F}(0,T;L^2(G)),$$
and that depends continuously on $y_T$ and $u$.

The second main result is the following null controllability of \eqref{1.2}.
\begin{thm}\label{thm1.2}
For any given $T>0$, $G_0\Subset G$ a nonempty open subset of $G$, and for all $y_T\in L^2_{\mathcal{F}_T}(\Omega;L^2(G))$, there exists a control $\hat{u}\in L^2_\mathcal{F}(0,T;L^2(G_0))$ such that the corresponding solution $\hat{y}$ to \eqref{1.2} satisfies  $$\hat{y}(0,\cdot) = 0\,\,\, \textnormal{in}\,\,G,\,\,\mathbb{P}\textnormal{-a.s.}$$ Furthermore, the control $\hat{u}$ can be chosen such that
\begin{equation}\label{1.44012}
\vert \hat{u}\vert^2_{L^2_\mathcal{F}(0,T;L^2(G))}\leq e^{CM}\,\vert y_T\vert^2_{L^2_{\mathcal{F}_T}(\Omega;L^2(G))},
\end{equation}
where $M=M(T,\vert a_1\vert_\infty,\vert a_2\vert_\infty,\vert B\vert_\infty)$ is given by
$$M\equiv1+\frac{1}{T}+\vert a_1\vert_\infty^{2/3}+T\vert a_1\vert_\infty+(1+T)\vert a_2\vert_\infty^2+(1+T)\vert B\vert_\infty^2.$$
\end{thm}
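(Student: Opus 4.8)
The plan is to argue by duality: the null controllability of the backward system \eqref{1.2} together with the cost bound \eqref{1.44012} is equivalent to an observability inequality for the adjoint system, which here is a \emph{forward} stochastic parabolic equation, and that inequality will be deduced from our global Carleman estimate for forward stochastic parabolic equations with a divergence source term read in the weak sense. First I would write down the adjoint system: for $\phi_0\in L^2_{\mathcal F_0}(\Omega;L^2(G))$, let $\phi$ solve
\begin{equation*}
\begin{cases}
d\phi-\nabla\cdot(A\nabla\phi)\,dt=\big(-a_1\phi+\nabla\cdot(B\phi)\big)\,dt-a_2\phi\,dW(t)&\textnormal{in }Q,\\
\phi=0&\textnormal{on }\Sigma,\\
\phi(0)=\phi_0&\textnormal{in }G.
\end{cases}
\end{equation*}
Applying It\^o's formula to $t\mapsto(y(t),\phi(t))_{L^2(G)}$ for the solution $y$ of \eqref{1.2}, using the symmetry of $A$ and the homogeneous boundary conditions, every interior term cancels by design: the drift term $a_2Y$ is matched by the cross-variation (since $\phi$ carries the diffusion coefficient $-a_2\phi$), and $B\cdot\nabla y$, after integration by parts in $G$, produces exactly $-(y,\nabla\cdot(B\phi))_{L^2(G)}$, which is matched by the divergence term in the $\phi$-equation; as $B$ is merely bounded, $\nabla\cdot(B\phi)$ is understood in the weak sense. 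One is left with
\begin{equation*}
\mathbb{E}\,(y_T,\phi(T))_{L^2(G)}-\mathbb{E}\,(y(0),\phi_0)_{L^2(G)}=\mathbb{E}\int_{Q_0}u\,\phi\,dx\,dt .
\end{equation*}

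Hence a control $\hat u\in L^2_{\mathcal F}(0,T;L^2(G_0))$ steers \eqref{1.2} to $\hat y(0)=0$, $\mathbb{P}$-a.s., if and only if $\mathbb{E}(y_T,\phi(T))_{L^2(G)}=\mathbb{E}\int_{Q_0}\hat u\,\phi\,dx\,dt$ for every admissible $\phi_0$. By the standard variational argument — minimizing the convex, coercive functional $J(\phi_0)=\tfrac12\mathbb{E}\int_{Q_0}|\phi|^2\,dx\,dt-\mathbb{E}(y_T,\phi(T))_{L^2(G)}$ over the completion of $L^2_{\mathcal F_0}(\Omega;L^2(G))$ for the seminorm $\phi_0\mapsto(\mathbb{E}\int_{Q_0}|\phi|^2\,dx\,dt)^{1/2}$, and taking $\hat u=\mathbbm{1}_{G_0}\hat\phi$ with $\hat\phi$ the solution attached to the minimizer — the existence of such $\hat u$ and the bound \eqref{1.44012} both follow from the observability inequality
\begin{equation*}
\mathbb{E}\,|\phi(T)|^2_{L^2(G)}\le e^{CM}\,\mathbb{E}\int_{Q_0}|\phi|^2\,dx\,dt .
\end{equation*}
Everything then reduces to proving this inequality.

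To that end I would apply the forward Carleman estimate to $\phi$, with source data consisting of the zeroth-order term $-a_1\phi$, the divergence part $B\phi$, and the diffusion part $-a_2\phi$. The lower-order contributions this creates on the right-hand side, of the form $\theta^2|a_1|_\infty^2|\phi|^2$ and $\theta^2(\lambda\varphi)^2(|a_2|_\infty^2+|B|_\infty^2)|\phi|^2$ (with $\theta=e^{\lambda\alpha}$ and $\varphi$ the usual Carleman weights), are absorbed into the dominant term $\theta^2\lambda^3\varphi^3|\phi|^2$ on the left once $\lambda\ge\lambda_0\big(1+|a_1|_\infty^{2/3}+(1+T)(|a_2|_\infty^2+|B|_\infty^2)\big)$, leaving
\begin{equation*}
\mathbb{E}\int_Q\theta^2\lambda^3\varphi^3|\phi|^2\,dx\,dt\le C\,\mathbb{E}\int_{Q_0}\theta^2\lambda^3\varphi^3|\phi|^2\,dx\,dt .
\end{equation*}
Because $\theta^2\varphi^3$ is bounded on $\overline{Q}$, the right-hand side is $\le e^{CM}\mathbb{E}\int_{Q_0}|\phi|^2\,dx\,dt$; because $\theta^2\lambda^3\varphi^3$ has a positive lower bound on $[T/4,3T/4]\times G$, the left-hand side controls $\mathbb{E}\int_{T/4}^{3T/4}|\phi(t)|^2_{L^2(G)}\,dt$. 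It remains to transfer this to the terminal time, which is done by an energy estimate: It\^o's formula for $|\phi(t)|^2_{L^2(G)}$ and the coercivity of $A$ give $\frac{d}{dt}\mathbb{E}|\phi(t)|^2_{L^2(G)}\le C\big(|a_1|_\infty+|a_2|_\infty^2+|B|_\infty^2\big)\mathbb{E}|\phi(t)|^2_{L^2(G)}$, whence by Gronwall's inequality $\mathbb{E}|\phi(T)|^2_{L^2(G)}\le e^{CT(|a_1|_\infty+|a_2|_\infty^2+|B|_\infty^2)}\mathbb{E}|\phi(t)|^2_{L^2(G)}$ for all $t\in[0,T]$; integrating over $t\in[T/4,3T/4]$ and combining with the previous bound yields the observability inequality with constant $e^{CM}$.

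The genuine difficulty lies upstream, in establishing the forward Carleman estimate itself in the presence of the weak divergence source term $\nabla\cdot(B\phi)$ (and, likewise, the backward Carleman estimate needed for Theorem~\ref{thm01.1}); this is where the duality technique announced in the abstract is used, and once it is in hand the argument above is essentially mechanical. The remaining delicate point is the bookkeeping of constants needed to obtain the stated form of $M$ — in particular the exponent $2/3$ on $|a_1|_\infty$ (forced by the absorption threshold $\lambda\gtrsim|a_1|_\infty^{2/3}$) rather than $1$, the splittings $(1+T)|a_2|_\infty^2$ and $(1+T)|B|_\infty^2$, and the $1+\tfrac{1}{T}$ term reflecting the cost of the parabolic flow on $(0,T)$ — which requires an optimized choice of the Carleman parameters $\lambda,\mu$ and careful tracking through each absorption and through the Gronwall step.
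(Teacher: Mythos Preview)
Your proposal is correct and follows essentially the same route as the paper. The adjoint system you wrote down is precisely equation~\eqref{1.5}, and the observability inequality you aim for is exactly~\eqref{1.8}; your derivation of it---applying the forward Carleman estimate of Theorem~\ref{thm1.1} with $F_1=-a_1\phi$, $F_2=-a_2\phi$, $F=\phi B$, absorbing the lower-order terms by enlarging $\lambda$, restricting to $[T/4,3T/4]$, and closing with an energy/Gronwall argument---matches the proof of Theorem~\ref{thm4.2} step for step. The one methodological difference is in passing from observability to null controllability: the paper uses a penalized optimal control problem (a functional $J_\varepsilon$ with a terminal penalty $\tfrac{1}{2\varepsilon}\mathbb{E}|y(0)|^2$, Euler--Lagrange characterization via the adjoint, uniform bounds from~\eqref{1.8}, then $\varepsilon\to0$), whereas you invoke the HUM-type minimization over the completion of initial data under the observation seminorm. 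Both are standard and yield the same control with the same cost $e^{CM}$; the penalty route has the minor advantage that one works only in genuine $L^2$ spaces and avoids any discussion of the completion, while your approach is slightly more direct once observability is in hand.
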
 
\begin{rmk}
Equations \eqref{1.1} (with $u\equiv v\equiv0$) and \eqref{1.2} (with $u\equiv 0$) combine the diffusion and convection (or advection) equations and are widely used to describe many physical phenomena such as fluid evolution with a particular speed. They also represent the change in a moving medium of some quantities such as heat, bacteria population, chemical concentration, etc. Under the stochastic perturbation, they can consider all small independent random disturbances that can happen along the evolution of such quantities.
\end{rmk}
\begin{rmk}
The null controllability results of equations \eqref{1.1} and \eqref{1.2} with convection terms appearing only on the drift parts under the regularity $W^{1,\infty}$ in space are already established in \cite{tang2009null}. In this paper, we study the null controllability of \eqref{1.1} and \eqref{1.2} by assuming that the coefficients of convection terms are only $L^\infty$ in space. In such a study, to prove our Carleman estimates, we employ the duality technique as described in \cite{imanuvilov2003carleman}, used for the first time in the stochastic setting in \cite{liu2014global} to eliminate an extra gradient term in the right-hand side of the previous Carleman estimate for forward stochastic parabolic equations proved in \cite{tang2009null}. In our case, we utilize the duality technique to deal with the problem of the weak divergence terms that arise in the adjoint equations \eqref{1.3} and \eqref{1.5}. Moreover, the study of the null controllability result of \eqref{1.1} with the general diffusion term ``$[a_2y+B_2\cdot\nabla y]dW(t)$'' was suggested in \cite[Remark 2.1]{tang2009null}.
\end{rmk}

The key point to establish the null controllability of \eqref{1.1} is to obtain an appropriate observability inequality for its adjoint backward stochastic parabolic equation 
\begin{equation}\label{1.3}
\begin{cases}
\begin{array}{ll}
dz + \nabla\cdot(A\nabla z) \,dt = (-a_1 z-a_2Z+\nabla\cdot(zB_1+ZB_2)) \,dt + Z \,dW(t) &\textnormal{in}\,\,Q,\\
z=0 &\textnormal{on}\,\,\Sigma,\\
z(T)=z_T &\textnormal{in}\,\,G,
			\end{array}
		\end{cases}
\end{equation}
where $z_T\in L^2_{\mathcal{F}_T}(\Omega;L^2(G))$. From \cite{zhou1992}, one can deduce the well-posedness result of equation \eqref{1.3} i.e., for all $z_T\in L^2_{\mathcal{F}_T}(\Omega;L^2(G))$, there exists a unique weak solution $$(z,Z)\in \Big(L^2_\mathcal{F}(\Omega;C([0,T];L^2(G)))\bigcap L^2_\mathcal{F}(0,T;H^1_0(G))\Big)\times L^2_\mathcal{F}(0,T;L^2(G)),$$
which depends continuously on $z_T$.  In section \ref{sec4}, we will prove the following observability inequality: For all $z_T\in L^2_{\mathcal{F}_T}(\Omega;L^2(G))$, the corresponding solution $(z,Z)$ to \eqref{1.3} satisfies
\begin{equation}\label{1.04}
\vert z(0,\cdot)\vert^2_{L^2_{\mathcal{F}_0}(\Omega;L^2(G))} \leq e^{CK}\,\Bigg[\mathbb{E}\int_{Q_0} z^2 dxdt+\mathbb{E}\int_{Q} Z^2 dxdt\Bigg],
\end{equation}    
with $K$ is as in Theorem \ref{thm01.1} (in fact, the positive constant $C$ appearing in \eqref{1.2201} is the same constant that appears in \eqref{1.04}). 

To establish the null controllability result of \eqref{1.2}, we introduce the following adjoint forward stochastic parabolic equation
	\begin{equation}\label{1.5}
		\begin{cases}
			\begin{array}{ll}
			dz-\nabla\cdot(A\nabla z) dt = (-a_1z+\nabla\cdot(zB))dt - a_2z dW(t)	 &\textnormal{in}\,\,Q,\\
			z=0 &\textnormal{on}\,\,\Sigma,\\
			z(0)=z_0 &\textnormal{in}\,\,G,
			\end{array}
		\end{cases}
	\end{equation}
 where $z_0\in L^2_{\mathcal{F}_0}(\Omega;L^2(G))$. Similarly to \eqref{1.1}, we have the following well-posedness of \eqref{1.5}: For any $z_0\in L^2_{\mathcal{F}_0}(\Omega;L^2(G))$, there exists a unique weak solution $$z\in L^2_\mathcal{F}(\Omega;C([0,T];L^2(G)))\bigcap L^2_\mathcal{F}(0,T;H^1_0(G)),$$
that depends continuously on $z_0$. 
In this case, we are interested in the following observability problem of \eqref{1.5} i.e., for all $z_0\in L^2_{\mathcal{F}_0}(\Omega;L^2(G))$, the associated solution $z$ of \eqref{1.5} fulfills that 
\begin{equation}\label{1.8}
    \vert z(T,\cdot)\vert^2_{L^2_{\mathcal{F}_T}(\Omega;L^2(G))} \leq e^{CM}\,\mathbb{E}\int_{Q_0} z^2 dxdt,
\end{equation}
with $M$ is similar to the one defined in Theorem \ref{thm1.2} (indeed, the constant $C$ appearing in \eqref{1.44012} is also similar to the constant that appears in \eqref{1.8}).

Now, several remarks are in order.
\begin{rmk}
Inequality \eqref{1.2201} (resp., \eqref{1.44012}) gives an explicit estimate of the cost of null controllability of equation \eqref{1.1} (resp., \eqref{1.2}) which blows up when $T\rightarrow0$. We refer to \cite{fernandezuazua,seidman} for some related works on the control cost in the deterministic setting.
\end{rmk}
\begin{rmk}
The estimate \eqref{1.04} (resp., \eqref{1.8}) provides the observability inequality for \eqref{1.3} (resp., \eqref{1.5}) with an explicit observability constant (see section \ref{sec4}), depending on the control time $T$, and potentials $a_1, a_2, B_1$ and $B_2$ (resp., $a_1, a_2$ and $B$). In \cite{tang2009null}, when $T\in(0,1)$, the observability constants have the form $Ce^{CT^{-4}}$, however in our situation, they will have the form $Ce^{CT^{-1}}$, which gives sharp observability estimates. At this level, it would be interesting to study the optimality of such observability constants (see, e.g., \cite{optilobsinq} for some related works in the deterministic case)
\end{rmk}
\begin{rmk}
From Carleman estimate \eqref{1.014} (resp., \eqref{3.202002}), it is straightforward to deduce the unique continuation property for solutions of equation \eqref{1.3} (resp., \eqref{1.5}). Therefore, utilizing the duality argument, one can also establish the approximate controllability results for equations \eqref{1.1} and \eqref{1.2}.
\end{rmk}
\begin{rmk}
The question of whether \eqref{1.1} is null controllable by using only the control $u$ in the drift term, or at least acting as well by only a localized (on a subdomain of $G$) control $v$ in the diffusion part, is still an open and interesting problem to solve. Currently, we employ Carleman estimates with (deterministic) weight functions that only depend on time and space variables. To handle this problem, we may need (random) weight functions. In the case when the coefficients are space-independent, using the spectral method, the author in \cite{lu2011some} gives some interesting results on the null and approximate controllability of forward stochastic heat equations with only one control force on the drift term. 
\end{rmk}

In the existing literature, one can find many controllability results for certain types of linear stochastic parabolic equations (see, e.g., \cite{barbu2003carleman,SPEwithDBC,SPEwithDBC1D,liu2014global,liu2019carleman,lu2021mathematical,fourthorder,lu2011some,tang2009null,yan2018carleman,yansing} and the references cited therein). Moreover, for some results concerning the null controllability of semilinear stochastic parabolic equations, we refer to \cite{san23}. In our study, we prove new Carleman estimates for stochastic parabolic equations containing a weak divergence source term. These estimates will allow us to establish the null controllability results for equations \eqref{1.1} and \eqref{1.2}. Note that the authors in
\cite{tang2009null} established Carleman estimates by using an exponentially weighted energy identity for a stochastic parabolic-like operator. In our situation, due to the presence of divergence terms that appear in the adjoint equations and that such terms can only be understood in a weak sense, we are not able to use the same weighted identity as in \cite{tang2009null}. Therefore, we adopt the duality technique to find the appropriate Carleman estimates for stochastic parabolic equations with weak divergence source terms. Dealing with this, when we want to derive an energy estimate or the duality relation between the controlled equations \eqref{1.1} and \eqref{1.2} and their adjoint equations, we will need the following Itô's formula for a weak form of processes, see, e.g., \cite[Chapter 2]{lu2021mathematical} for more details and proofs.
\begin{lm}\label{lm1.1}
Let $z,y\in L^2_\mathcal{F}(0,T;H^1_0(G))$, $z_T\in L^2_{\mathcal{F}_T}(\Omega;L^2(G))$, $y_0\in L^2_{\mathcal{F}_0}(\Omega;L^2(G))$, $\phi,\Tilde{\phi}\in L^2_\mathcal{F}(0,T;H^{-1}(G))$ and $\Psi,Z\in L^2_\mathcal{F}(0,T;L^2(G))$, such that for all $t\in [0,T]$, the processes $(z,Z)$ and $y$ satisfying the following equations
{\small\begin{align*}
z(t)=z_T-\int_t^T \phi(s)ds-\int_t^T Z(s)dW(s),\,\,\mathbb{P}\textnormal{-a.s.},\\
y(t)=y_0+\int_0^t \Tilde{\phi}(s)ds+\int_0^t \Psi(s)dW(s),\,\,\mathbb{P}\textnormal{-a.s.}
\end{align*}}
Then
\begin{enumerate}
\item For any $t\in[0,T]$, it holds that
\begin{align*}
\vert z(t)\vert^2_{L^2(G)}=&\,\vert z(0)\vert^2_{L^2(G)}+2\int_0^t \langle\phi,z\rangle_{H^{-1}(G),H^1_0(G)}ds\\
&+2\int_0^t (z,Z)_{L^2(G)}dW(s)+\int_0^t \vert Z\vert_{L^2(G)}^2ds,\;\mathbb{P}\textnormal{-a.s.}
\end{align*}
\item For all $t\in[0,T]$, we have
\begin{align*}
( z(t),y(t))_{L^2(G)}=&\,( z(0),y_0)_{L^2(G)}+\int_0^t \langle\phi(s),y(s)\rangle_{H^{-1}(G),H^1_0(G)}ds\\
&+\int_0^t \langle\Tilde{\phi}(s),z(s)\rangle_{H^{-1}(G),H^1_0(G)}ds+\int_0^t (y(s),Z(s))_{L^2(G)}dW(s)\\
&+\int_0^t (z(s),\Psi(s))_{L^2(G)}dW(s)+\int_0^t (Z(s),\Psi(s))_{L^2(G)}ds,\;\mathbb{P}\textnormal{-a.s.}
\end{align*}
\end{enumerate}
\end{lm}
In the above lemma, $(\cdot,\cdot)_{L^2(G)}$ and $\langle\cdot,\cdot\rangle_{H^{-1}(G),H^1_0(G)}$ denote respectively, the inner product in $L^2(G)$, and the duality product between $H^1_0(G)$ and its dual space $H^{-1}(G)$ w.r.t the pivot space $L^2(G)$.

Note also that the operator $\nabla\cdot$ appearing in \eqref{1.3} and \eqref{1.5} is to be understood in the weak sense, given by the following lemma.
\begin{lm}\label{lm1.2}
For any $\eta\in L^2(G;\mathbb{R}^N)$, we have the following extension of the operator divergence of $\eta$ as the linear continuous operator on $H^1_0(G)$ given by
\begin{equation*}
\nabla\cdot\eta:H^1_0(G)\longrightarrow\mathbb{R},\;\;\; v \longmapsto -\int_G \eta\cdot\nabla v\, dx.
\end{equation*}
\end{lm}

The rest of this paper is organized as follows. Section \ref{sec2} (resp., \ref{sec3}) is devoted to establishing global Carleman estimates for general backward (resp., forward) stochastic parabolic equations with a weak divergence source term. In section \ref{sec4}, we apply such Carleman estimates to show the observability inequalities \eqref{1.04} and \eqref{1.8}. The proof of Theorems \ref{thm01.1} and \ref{thm1.2} is given in section \ref{sec5}. Finally, we have included an appendix where we complete the proof of a known Carleman estimate for backward stochastic parabolic equations by carefully examining the dependence of Carleman's parameters on time $T$. Such an estimate will be essential in our analysis in section \ref{sec2}.
\section{Global Carleman estimate for backward stochastic parabolic equations}\label{sec2}
Due to Imanuvilov, we have the following technical lemma.
\begin{lm}\label{lm2.1}
For any nonempty open subset $G_1\Subset G$, there exists a function $\psi\in C^4(\overline{G})$ such that
\begin{equation}
 \psi(x)>0 \,\,\textnormal{in}\,\,G,\quad\quad\psi(x)=0\,\,\textnormal{on}\,\,\Gamma,\quad\quad \vert\nabla\psi(x)\vert>0\,\,\textnormal{in}\,\,\overline{G\setminus G_1}.
\end{equation}
\end{lm}
The existence of such function $\psi$ in Lemma \ref{lm2.1} is proved in \cite{fursikov1996controllability}. For any (large) parameters $\lambda, \mu\geq1$, we define the following weight functions
\begin{equation}\label{2.2012}
\begin{array}{l}
\varphi(t,x) = (t(T-t))^{-1}\,e^{\mu\psi(x)},\quad \theta=e^l,\quad l=\lambda\alpha,\\\\
\alpha(t,x) = (t(T-t))^{-1}\,(e^{\mu\psi(x)}-e^{2\mu\vert\psi\vert_\infty}).
\end{array}
\end{equation}
It is easy to check that for some suitable constant $C>0$, we have for all $(t,x)\in Q$ 
	\begin{equation}\label{2.301}
		\begin{array}{l}
	\varphi(t,x)\geq CT^{-2},\qquad\vert\varphi_t(t,x)\vert\leq CT\varphi^2(t,x),\qquad\vert\varphi_{tt}(t,x)\vert\leq CT^2\varphi^3(t,x),\\\\
			\vert\alpha_t(t,x)\vert\leq CTe^{2\mu\vert\psi\vert_\infty}\varphi^2(t,x),\qquad\vert\alpha_{tt}(t,x)\vert\leq CT^2e^{2\mu\vert\psi\vert_\infty}\varphi^3(t,x).
		\end{array}
	\end{equation}
 
We now consider the following backward linear stochastic parabolic equation 
\begin{equation}\label{202.3}
\begin{cases}
\begin{array}{ll}
dz + \nabla\cdot(A\nabla z) \,dt = (F_0+\nabla\cdot F) \,dt + Z \,dW(t) &\textnormal{in}\,\,Q,\\
z=0 &\textnormal{on}\,\,\Sigma,\\
z(T)=z_T &\textnormal{in}\,\,G,
			\end{array}
		\end{cases}
\end{equation}
where $z_T\in L^2_{\mathcal{F}_T}(\Omega;L^2(G))$, $F_0\in L^2_\mathcal{F}(0,T;L^2(G))$ and $F\in L^2_\mathcal{F}(0,T;L^2(G;\mathbb{R}^N))$. 

The main result of this section is the following global Carleman estimate for \eqref{202.3}.
\begin{thm}\label{thm02.1}
There exists a constant $C=C(G,G_0,A)>0$ so that for all $z_T\in L^2_{\mathcal{F}_T}(\Omega;L^2(G))$, $F_0\in L^2_\mathcal{F}(0,T;L^2(G))$ and $F\in L^2_\mathcal{F}(0,T;L^2(G;\mathbb{R}^N))$, the weak solution $(z,Z)$ of \eqref{202.3} satisfies
\begin{align}\label{1.014}
\begin{aligned}
&\,\lambda^3\mu^4\mathbb{E}\int_Q \theta^2\varphi^3z^2dxdt+\lambda\mu^2\mathbb{E}\int_Q \theta^2\varphi\vert\nabla z\vert^2dxdt\\
&\leq  C\Bigg[\lambda^3\mu^4\mathbb{E}\int_{Q_0} \theta^2\varphi^3z^2dxdt+\mathbb{E}\int_Q \theta^2 F_0^2dxdt\\
&\quad\quad\;\,+\lambda^2\mu^2\mathbb{E}\int_Q \theta^2\varphi^2\vert F\vert^2dxdt+\lambda^2\mu^2\mathbb{E}\int_Q \theta^2\varphi^2Z^2dxdt\Bigg],
\end{aligned}
\end{align}
for all $\mu\geq C$ and $\lambda\geq C(e^{2\mu\vert\psi\vert_\infty}T+T^2)$.
\end{thm}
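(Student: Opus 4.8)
The plan is to obtain the Carleman estimate \eqref{1.014} for the backward equation \eqref{202.3} with a weak divergence source term $\nabla\cdot F$ by a duality argument, reducing it to a known Carleman estimate (the one recalled in the appendix) for a backward stochastic parabolic equation with only an $L^2$ (non-divergence) source term. The idea, following \cite{imanuvilov2003carleman,liu2014global}, is to notice that the bad term $\mathbb{E}\int_Q\theta^2\varphi^2|F|^2\,dxdt$ on the right of \eqref{1.014} has exactly the structure one gets by \emph{testing} a weighted version of the solution against an auxiliary equation. More precisely, I would introduce, for a given $G_1$ with $G_0\Subset G_1\Subset G$, the weights $\psi,\varphi,\theta,\alpha$ from \eqref{2.2012}–\eqref{2.301}, and for arbitrary right-hand sides consider the forward stochastic parabolic problem whose solution plays the role of the "dual variable'' carrying the factor $\theta^2\varphi$ (or an appropriate power of $\varphi$) that converts the gradient term $\lambda\mu^2\mathbb{E}\int_Q\theta^2\varphi|\nabla z|^2$ into a pairing with $F$.

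The key steps, in order, are as follows. First, split $z=\theta^{-1}w$ (conjugation by the weight) only insofar as is needed to set up the functional framework; the cleaner route here is not to conjugate $z$ directly but to run the duality at the level of the equations. Second, fix $\lambda,\mu$ in the admissible range $\mu\ge C$, $\lambda\ge C(e^{2\mu|\psi|_\infty}T+T^2)$ and, given the solution $(z,Z)$ of \eqref{202.3}, introduce the auxiliary forward equation
\[
\begin{cases}
d\rho-\nabla\cdot(A\nabla\rho)\,dt=\big(g_0+\lambda^3\mu^4\theta^2\varphi^3 z\,\mathbbm{1}\big)\,dt+g_1\,dW(t)&\text{in }Q,\\
\rho=0&\text{on }\Sigma,\\
\rho(0)=0&\text{in }G,
\end{cases}
\]
with source terms $g_0,g_1$ to be chosen, and apply Lemma \ref{lm1.1}(2) to the pair $(z,Z)$ and $\rho$ to get the duality identity
\[
\mathbb{E}\int_Q \lambda^3\mu^4\theta^2\varphi^3 z^2\,dxdt
=\mathbb{E}\int_Q\!\big(F_0\rho-F\cdot\nabla\rho+\cdots\big)\,dxdt,
\]
where the divergence term has been integrated by parts against $\nabla\rho\in L^2_\mathcal{F}(0,T;L^2(G;\mathbb{R}^N))$ using Lemma \ref{lm1.2}. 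Third, apply the \emph{known} Carleman estimate from the appendix to the forward dual equation to bound $\rho$ and $\nabla\rho$ in the appropriate weighted norms; this is where one chooses $g_0,g_1$ so that the weighted norms of $\rho,\nabla\rho$ on the left of that estimate are precisely $\mathbb{E}\int_Q\theta^2 F_0^2$, $\lambda^2\mu^2\mathbb{E}\int_Q\theta^2\varphi^2|F|^2$, $\lambda^2\mu^2\mathbb{E}\int_Q\theta^2\varphi^2 Z^2$, plus the local term $\lambda^3\mu^4\mathbb{E}\int_{Q_0}\theta^2\varphi^3 z^2$. Fourth, combine the two displays via Cauchy–Schwarz and Young's inequality, absorbing the $z$-terms on the left and the remaining $\nabla z$-term either from a standard energy/Caccioppoli estimate for \eqref{202.3} (testing against $\theta^2\varphi z$ and using the ellipticity constant $\beta$) or directly from the structure of the dual estimate; the powers $\lambda^3\mu^4$ on $\varphi^3 z^2$ and $\lambda\mu^2$ on $\varphi|\nabla z|^2$ are dictated by exactly this bookkeeping.

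The main obstacle I expect is the careful tracking of the powers of $\lambda$, $\mu$, $\varphi$, and especially of $T$ through the duality pairing and the invocation of the appendix estimate: one must verify that the weights appearing on the two sides match up so that no spurious power of $\varphi$ or $T$ is left over, and that the threshold $\lambda\ge C(e^{2\mu|\psi|_\infty}T+T^2)$ is enough to absorb all lower-order terms (the $\alpha_t,\alpha_{tt},\varphi_t,\varphi_{tt}$ contributions controlled by \eqref{2.301}). A secondary technical point is justifying the Itô pairing in Lemma \ref{lm1.1}(2) when one of the source terms is only in $H^{-1}(G)$ (the term $\nabla\cdot F$), which is precisely why that lemma was stated in the weak form; one applies it with $\phi=F_0+\nabla\cdot F\in L^2_\mathcal{F}(0,T;H^{-1}(G))$ and $\tilde\phi$ the (smooth enough, by the appendix Carleman estimate's regularity) right-hand side of the dual equation. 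Once the matching of weights is done correctly, \eqref{1.014} follows by absorption, and the asserted admissible ranges of $\mu$ and $\lambda$ are exactly those inherited from Lemma \ref{lm2.1}, \eqref{2.301}, and the appendix estimate.
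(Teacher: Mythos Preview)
Your overall architecture is correct and matches the paper's: introduce an auxiliary forward problem driven by the weighted source $\lambda^3\mu^4\theta^2\varphi^3 z$ (and also $\lambda^2\mu^2\theta^2\varphi^2 Z$ in the diffusion), pair it with $(z,Z)$ via Lemma~\ref{lm1.1}(2) to get a duality identity in which $\nabla\cdot F$ becomes $-F\cdot\nabla\rho$, then close by Young's inequality and recover the gradient term through a Caccioppoli-type computation on $d(\theta^2\varphi z,z)_{L^2(G)}$. Steps 2, 4 and your treatment of the $H^{-1}$ pairing are essentially what the paper does.

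The genuine gap is in your step 3. You propose to ``apply the known Carleman estimate from the appendix to the forward dual equation to bound $\rho$ and $\nabla\rho$.'' This fails on two counts. First, Lemma~\ref{lm2.2} is for \emph{backward} equations, not forward ones. Second, and more importantly, any Carleman estimate applied directly to $\rho$ would produce bounds on $\theta^2\varphi^3\rho^2$ and $\theta^2\varphi|\nabla\rho|^2$, i.e.\ with \emph{positive} powers of $\theta$. For the duality pairing to close after Young's inequality you need the \emph{inverse}-weighted quantities $\mathbb{E}\int_Q\theta^{-2}\rho^2$, $\lambda^{-2}\mu^{-2}\mathbb{E}\int_Q\theta^{-2}\varphi^{-2}|\nabla\rho|^2$, etc., bounded by $\lambda^3\mu^4\mathbb{E}\int_Q\theta^2\varphi^3 z^2+\lambda^2\mu^2\mathbb{E}\int_Q\theta^2\varphi^2 Z^2$. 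No direct Carleman estimate gives you $\theta^{-2}$ on the solution side; your plan of ``choosing $g_0,g_1$'' cannot manufacture this sign flip.

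What the paper actually does (Proposition~\ref{prop2.4}) is to make the auxiliary forward equation \emph{controlled}, with controls $(u,v)$ added in drift and diffusion and with $\rho(0)=0$, and then run a penalized optimal control problem $J_\varepsilon$ whose Euler--Lagrange system is a \emph{backward} equation \eqref{2.04} with source $-\theta_\varepsilon^{-2}y_\varepsilon$. Lemma~\ref{lm2.2} is applied to \emph{that} backward equation, and the resulting Carleman bound, combined with the optimality relations $u_\varepsilon=-\mathbbm{1}_{G_0}\lambda^3\mu^4\theta^2\varphi^3 r_\varepsilon$, $v_\varepsilon=-\lambda^2\mu^2\theta^2\varphi^2 R_\varepsilon$, is precisely what converts the $\theta^2$-weighted Carleman bound on $(r_\varepsilon,R_\varepsilon)$ into the $\theta^{-2}$-weighted bound \eqref{2.2} on $(\hat y,\hat u,\hat v)$. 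This intermediate null-controllability step is the missing idea in your sketch; once you insert it, the rest of your outline goes through exactly as in the paper.
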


To prove Theorem \ref{thm02.1}, we will need the following auxiliary result which is a Carleman estimate for equation \eqref{202.3} (with $F\equiv0$). For the reader’s convenience, we have sketched the proof of this result in the appendix, at the end of this paper.
\begin{lm}\label{lm2.2}
There exists $C=C(G,G_0,A)>0$ so that for all $z_T\in L^2_{\mathcal{F}_T}(\Omega;L^2(G))$, $F_0\in L^2_\mathcal{F}(0,T;L^2(G))$, the weak solution $(z,Z)$ of \eqref{202.3} $($with $F\equiv0$$)$ satisfies
\begin{align}
\begin{aligned}
&\,\lambda^3\mu^4\mathbb{E}\int_Q \theta^2\varphi^3z^2dxdt+\lambda\mu^2\mathbb{E}\int_Q \theta^2\varphi\vert\nabla z\vert^2dxdt\\
\label{2.061}&\leq  C\Bigg[\lambda^3\mu^4\mathbb{E}\int_{Q_0} \theta^2\varphi^3z^2dxdt+\mathbb{E}\int_Q \theta^2 F_0^2dxdt+\lambda^2\mu^2\mathbb{E}\int_Q \theta^2\varphi^2Z^2dxdt\Bigg],
\end{aligned}
\end{align}
for $\mu\geq C$ and $\lambda\geq C(e^{2\mu\vert\psi\vert_\infty}T+T^2)$.
\end{lm}

Now, let us consider the following controlled forward stochastic parabolic equation
\begin{equation}\label{2.1}
\begin{cases}
\begin{array}{ll}
dy - \nabla\cdot(A\nabla y) \,dt = (\lambda^3\mu^4\theta
^2\varphi^3z+\mathbbm{1}_{G_0}u) \,dt + (\lambda^2\mu^2\theta
^2\varphi^2Z+v) \,dW(t) &\textnormal{in}\,\,Q,\\
y=0 &\textnormal{on}\,\,\Sigma,\\
y(0)=0 &\textnormal{in}\,\,G,
			\end{array}
		\end{cases}
\end{equation}
where the pair $(u,v)\in L^2_\mathcal{F}(0,T;L^2(G_0))\times L^2_\mathcal{F}(0,T;L^2(G))$ is the control of the system, $(z,Z)$ is the solution of equation \eqref{202.3} and $\theta$ and $\varphi$ are the weight functions defined in \eqref{2.2012}. Based on the Carleman estimate in the above Lemma, we first establish the following result on the null controllability and a Carleman estimate of system \eqref{2.1} by using the duality technique.

\begin{prop}\label{prop2.4}
There exists $(\hat{u},\hat{v})\in L^2_\mathcal{F}(0,T;L^2(G_0))\times L^2_\mathcal{F}(0,T;L^2(G))$ a pair of controls such that the associated solution $\hat{y}\in L^2_\mathcal{F}(0,T;H^1_0(G))$ to \eqref{2.1} satisfies 
$$\hat{y}(T,\cdot)=0 \;\;\textnormal{in}\;\, G, \;\,\mathbb{P}\textnormal{-a.s.}$$ 
Moreover, there exists $C=C(G,G_0,A)>0$ such that 
\begin{align}
\begin{aligned}
&\,\mathbb{E}\int_Q \theta^{-2}\hat{y}^2dxdt+\lambda^{-2}\mu^{-2}\mathbb{E}\int_Q \theta^{-2}\varphi^{-2}\vert\nabla\hat{y}\vert^2dxdt\\
&+\lambda^{-3}\mu^{-4}\mathbb{E}\int_{Q} \theta^{-2}\varphi^{-3}\hat{u}^2dxdt+\lambda^{-2}\mu^{-2}\mathbb{E}\int_Q \theta^{-2}\varphi^{-2}\hat{v}^2dxdt\\
\label{2.2}&
\leq C\Bigg[\lambda^3\mu^4\mathbb{E}\int_Q \theta^2\varphi^3z^2dxdt + \lambda^2\mu^2\mathbb{E}\int_Q \theta^2\varphi^2 Z^2dxdt\Bigg],
\end{aligned}
\end{align}
for all $\mu\geq C$ and $\lambda\geq C(e^{2\mu\vert\psi\vert_\infty}T+T^2)$.
\end{prop}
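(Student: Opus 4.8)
The plan is to apply the classical Hilbert Uniqueness Method (HUM) / penalized least-squares (Fursikov--Imanuvilov) duality, with the Carleman estimate of Lemma \ref{lm2.2} providing the observability that makes the whole scheme quantitative. First I would introduce, for each $\varepsilon>0$, the strictly convex functional
\begin{align*}
J_\varepsilon(z_T)=\frac12\mathbb{E}\int_{Q_0}\lambda^3\mu^4\theta^2\varphi^3 z^2\,dxdt+\frac12\mathbb{E}\int_Q\lambda^2\mu^2\theta^2\varphi^2 Z^2\,dxdt+\frac{\varepsilon}{2}\,\vert z_T\vert_{L^2_{\mathcal{F}_T}(\Omega;L^2(G))}^2,
\end{align*}
defined over $z_T\in L^2_{\mathcal{F}_T}(\Omega;L^2(G))$, where $(z,Z)$ solves the adjoint backward equation \eqref{202.3} with $F_0\equiv0$, $F\equiv0$ and terminal datum $z_T$. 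Since \eqref{2.1} has the special source terms $\lambda^3\mu^4\theta^2\varphi^3 z$ and $\lambda^2\mu^2\theta^2\varphi^2 Z$ built from that very adjoint solution, the pairing $\langle\,\cdot\,,\,\cdot\,\rangle$ between \eqref{2.1} and \eqref{202.3} via Lemma \ref{lm1.1}(2) will produce exactly the quadratic terms in $J_\varepsilon$; thus the problem ``$\hat y(T)=0$'' is the Euler--Lagrange condition of $\min J_\varepsilon$ up to an $O(\varepsilon)$ error.

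Next I would establish coercivity of $J_\varepsilon$: the Carleman estimate \eqref{2.061} (with $F_0\equiv0$) gives
\begin{align*}
\vert z(0,\cdot)\vert^2_{L^2_{\mathcal{F}_0}(\Omega;L^2(G))}\leq C\Bigg[\lambda^3\mu^4\mathbb{E}\int_{Q_0}\theta^2\varphi^3 z^2\,dxdt+\lambda^2\mu^2\mathbb{E}\int_Q\theta^2\varphi^2 Z^2\,dxdt\Bigg],
\end{align*}
after absorbing the $Z^2$ term on the left using the energy estimate for \eqref{202.3}; more precisely, an Itô computation on $\vert z\vert^2$ shows $\mathbb{E}\int_Q Z^2\,dxdt$ and $\vert z_T\vert^2$ are each controlled by $\vert z(0)\vert^2$ plus lower-order weighted terms, which closes the loop. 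Hence $J_\varepsilon$ is continuous, strictly convex and coercive on $L^2_{\mathcal{F}_T}(\Omega;L^2(G))$, so it admits a unique minimizer $z_T^\varepsilon$ with adjoint state $(z^\varepsilon,Z^\varepsilon)$. I would then \emph{define} the controls by $\hat u^\varepsilon=\mathbbm{1}_{G_0}\lambda^3\mu^4\theta^2\varphi^3 z^\varepsilon$ and $\hat v^\varepsilon=\lambda^2\mu^2\theta^2\varphi^2 Z^\varepsilon$, let $y^\varepsilon$ be the corresponding solution of \eqref{2.1}, and read off from the optimality condition that $\vert y^\varepsilon(T)\vert\leq C\varepsilon^{1/2}\vert z_T^\varepsilon\vert$ together with the uniform bound
\begin{align*}
\mathbb{E}\int_{Q_0}\lambda^3\mu^4\theta^2\varphi^3 (z^\varepsilon)^2\,dxdt+\mathbb{E}\int_Q\lambda^2\mu^2\theta^2\varphi^2 (Z^\varepsilon)^2\,dxdt\leq C\vert z_T^\varepsilon\vert_{L^2_{\mathcal{F}_T}(\Omega;L^2(G))}^2,
\end{align*}
which, when fed back through the coercivity inequality, shows $\varepsilon\vert z_T^\varepsilon\vert^2\to0$ and keeps the weighted $L^2$-norms of $\hat u^\varepsilon,\hat v^\varepsilon$ bounded. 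Passing to a weak limit $\varepsilon\to0$ yields controls $(\hat u,\hat v)$ with $\hat y(T)=0$.

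It remains to upgrade this to the precise estimate \eqref{2.2}: the weighted bounds on $\hat u=\mathbbm{1}_{G_0}\lambda^3\mu^4\theta^2\varphi^3 z$ and $\hat v=\lambda^2\mu^2\theta^2\varphi^2 Z$ directly give the $\theta^{-2}\varphi^{-3}\hat u^2$ and $\theta^{-2}\varphi^{-2}\hat v^2$ integrals on the left of \eqref{2.2} in terms of $\mathbb{E}\int_{Q_0}\theta^2\varphi^3 z^2$ and $\mathbb{E}\int_Q\theta^2\varphi^2 Z^2$, hence by the Carleman estimate in terms of the right-hand side of \eqref{2.2}; and the $\mathbb{E}\int_Q\theta^{-2}\hat y^2$ and $\lambda^{-2}\mu^{-2}\mathbb{E}\int_Q\theta^{-2}\varphi^{-2}\vert\nabla\hat y\vert^2$ terms come from a weighted energy estimate for \eqref{2.1}, obtained by applying Itô's formula to $\theta^{-2}\vert\hat y\vert^2$ (or, cleaner, by testing the equation for $\theta^{-1}\hat y$), where the source terms $\lambda^3\mu^4\theta^2\varphi^3 z$, $\lambda^2\mu^2\theta^2\varphi^2 Z$ again pair against $\theta^{-2}\hat y$ to leave weighted $z,Z$ integrals controlled by the Carleman right-hand side; the derivative-of-weight terms $\theta_t,\nabla\theta$ are absorbed using the bounds \eqref{2.301} and the largeness of $\lambda,\mu$. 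The main obstacle I anticipate is the careful bookkeeping in this last weighted energy estimate: one must verify that every term produced by differentiating the singular weight $\theta^{\pm1}$ (which blows up like $\exp(c\,T^{-2}\cdots)$ near $t=0,T$) is genuinely dominated, uniformly in the Carleman parameters, by the good terms — this is exactly where the stated range $\mu\geq C$, $\lambda\geq C(e^{2\mu\vert\psi\vert_\infty}T+T^2)$ is consumed, and where one has to be scrupulous about the $T$-dependence so that it feeds correctly into the constant $K$ of Theorem \ref{thm01.1}. A secondary technical point is justifying all the Itô pairings in the weak ($H^{-1}$-valued drift) setting, which is precisely what Lemma \ref{lm1.1} is designed to handle.
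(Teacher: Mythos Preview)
Your outline has a genuine gap stemming from a conflation of two different objects both written $(z,Z)$. In \eqref{2.1} the pair $(z,Z)$ is the \emph{fixed} solution of \eqref{202.3} (with prescribed $z_T,F_0,F$); it is data, not a variable. The HUM adjoint you introduce---the solution of the homogeneous version of \eqref{202.3} with varying terminal datum---is a \emph{different} process, say $(\tilde z,\tilde Z)$. As written, your functional $J_\varepsilon$ is purely quadratic in $(\tilde z,\tilde Z)$ plus the $\varepsilon$-regularization, with no linear term linking it to the fixed source of \eqref{2.1}; hence its unique minimizer is $z_T=0$, producing zero controls that certainly do not steer $y$ to zero. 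A correct HUM functional must carry the linear term $\mathbb{E}\int_Q\lambda^3\mu^4\theta^2\varphi^3 z\,\tilde z\,dxdt+\mathbb{E}\int_Q\lambda^2\mu^2\theta^2\varphi^2 Z\,\tilde Z\,dxdt$, and then the Euler--Lagrange condition becomes $y^\varepsilon(T)=-\varepsilon z_T^\varepsilon$ and the uniform bounds follow from $J_\varepsilon(z_T^\varepsilon)\le J_\varepsilon(0)=0$ combined with \eqref{2.061}. Your sentence ``source terms\ldots built from that very adjoint solution'' is exactly where the identification goes wrong.

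The paper proceeds differently and avoids this pitfall: it minimizes over the controls the cost
\[
J_\varepsilon(u,v)=\tfrac12\mathbb{E}\!\int_Q\theta_\varepsilon^{-2}y^2+\tfrac12\lambda^{-3}\mu^{-4}\mathbb{E}\!\int_{Q_0}\theta^{-2}\varphi^{-3}u^2+\tfrac12\lambda^{-2}\mu^{-2}\mathbb{E}\!\int_Q\theta^{-2}\varphi^{-2}v^2+\tfrac{1}{2\varepsilon}\mathbb{E}\!\int_G|y(T)|^2,
\]
characterizes the minimizer via an adjoint $(r_\varepsilon,R_\varepsilon)$ solving $dr_\varepsilon+\nabla\!\cdot(A\nabla r_\varepsilon)\,dt=-\theta_\varepsilon^{-2}y_\varepsilon\,dt+R_\varepsilon\,dW$, $r_\varepsilon(T)=\varepsilon^{-1}y_\varepsilon(T)$, and applies \eqref{2.061} to \emph{that} equation (with $F_0=-\theta_\varepsilon^{-2}y_\varepsilon$). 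Two features of this route are missing from your plan. First, the state term $\mathbb{E}\int_Q\theta^{-2}\hat y^2$ on the left of \eqref{2.2} is built into the cost and falls out directly, rather than through an after-the-fact weighted energy estimate. Second, and more seriously, the paper uses the regularized weight $\theta_\varepsilon=e^{\lambda\alpha_\varepsilon}$ with $\alpha_\varepsilon=((t+\varepsilon)(T-t+\varepsilon))^{-1}(e^{\mu\psi}-e^{2\mu|\psi|_\infty})$, so that $\theta_\varepsilon^{-2}$ stays bounded near $t=0,T$; this is what makes the It\^o computations $d(y_\varepsilon r_\varepsilon)$ and $d(\theta_\varepsilon^{-2}\varphi^{-2}y_\varepsilon^2)$ legitimate. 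Your proposal to apply It\^o's formula to $\theta^{-2}\hat y^2$ runs into the endpoint blow-up $\theta^{-2}\to+\infty$ at $t=0,T$ (the issue is not the derivatives of the weight but the boundary terms $\theta^{-2}(T^-)\,y^2(T^-)$), and this is a genuine obstruction---not mere bookkeeping---which the $\theta_\varepsilon$ device is designed to cure.
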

\begin{proof}
Let $\varepsilon>0$, define $\theta_\varepsilon=e^{\lambda\alpha_\varepsilon}$ with $$\alpha_\varepsilon\equiv\alpha_\varepsilon(t,x)=((t+\varepsilon)(T-t+\varepsilon))^{-1}(e^{\mu\psi(x)}-e^{2\mu\vert\psi\vert_\infty}),$$ and note that $\theta_\varepsilon\geq\theta$. Define the following minimization problem
\begin{equation}\label{2.03}
    \inf\big\{ J_\varepsilon(u,v)\,\vert\, u,v\in\mathcal{U}\big\},
\end{equation}
where 
\begin{align*}
 J_\varepsilon(u,v)=&\,\frac{1}{2}\mathbb{E}\int_Q \theta^{-2}_\varepsilon y^2dxdt+\frac{1}{2}\mathbb{E}\int_{Q_0} \lambda^{-3}\mu^{-4}\theta^{-2}\varphi^{-3}u^2dxdt\\
 &+\frac{1}{2}\mathbb{E}\int_Q \lambda^{-2}\mu^{-2}\theta^{-2}\varphi^{-2}v^2dxdt+\frac{1}{2\varepsilon}\mathbb{E}\int_G \vert y(T)\vert^2dx,
 \end{align*}
and
\begin{align*}
\mathcal{U}=\Bigg\{(&u,v)\in L^2_\mathcal{F}(0,T;L^2(G_0))\times L^2_\mathcal{F}(0,T;L^2(G)):\\
&\mathbb{E}\int_{Q_0} \theta^{-2}\varphi^{-3}u^2dxdt<\infty\,,\quad\mathbb{E}\int_Q \theta^{-2}\varphi^{-2}v^2dxdt<\infty\Bigg\}.
\end{align*}
First, it is easy to see that $J_\varepsilon$ is continuous, strictly convex, and coercive, then the problem \eqref{2.03} admits a unique optimal solution $(u_\varepsilon,v_\varepsilon)\in\mathcal{U}$. Moreover, by the standard duality argument (see, for instance, \cite{imanuvilov2003carleman,lions1972some}), such optimal solution can be characterized as
\begin{equation}\label{2.003}
 u_\varepsilon=-\mathbbm{1}_{G_0}\lambda^3\mu^4\theta^2\varphi^3r_\varepsilon\,\,,\quad\quad \quad v_\varepsilon=-\lambda^2\mu^2\theta^2\varphi^2R_\varepsilon,
 \end{equation}
with $(r_\varepsilon,R_\varepsilon)$ is the solution of the following backward stochastic parabolic equation
\begin{equation}\label{2.04}
\begin{cases}
\begin{array}{ll}
dr_\varepsilon + \nabla\cdot(A\nabla r_\varepsilon) \,dt = -\theta^{-2}_\varepsilon y_\varepsilon \,dt + R_\varepsilon \,dW(t) &\textnormal{in}\,\,Q,\\
r_\varepsilon=0 &\textnormal{on}\,\,\Sigma,\\
r_\varepsilon(T)=\frac{1}{\varepsilon}y_\varepsilon(T) &\textnormal{in}\,\,G,
			\end{array}
		\end{cases}
\end{equation}
where $y_\varepsilon$ is the solution of \eqref{2.1} with controls $u_\varepsilon$ and $v_\varepsilon$. Computing $d(y_\varepsilon r_\varepsilon)$ by Itô's formula and using \eqref{2.003}, we get
\begin{align}
\begin{aligned}
&\,\frac{1}{\varepsilon}\mathbb{E}\int_G \vert y_\varepsilon(T)\vert^2dx+\lambda^3\mu^4\mathbb{E}\int_{Q_0}\theta^2\varphi^3r_\varepsilon^2dxdt\\
&+\lambda^2\mu^2\mathbb{E}\int_{Q}\theta^2\varphi^2R_\varepsilon^2dxdt+\mathbb{E}\int_{Q}\theta_\varepsilon^{-2}y_\varepsilon^2dxdt\\
\label{2.005}&
= \lambda^3\mu^4\mathbb{E}\int_{Q}\theta^2\varphi^3 zr_\varepsilon dxdt+\lambda^2\mu^2\mathbb{E}\int_{Q}\theta^2\varphi^2Z R_\varepsilon dxdt.
\end{aligned}
\end{align}
Applying Young's inequality in the right-hand side of \eqref{2.005}, we have for all $\rho>0$
\begin{align}
\begin{aligned}
&\,\frac{1}{\varepsilon}\mathbb{E}\int_G \vert y_\varepsilon(T)\vert^2dx+\lambda^3\mu^4\mathbb{E}\int_{Q_0}\theta^2\varphi^3r_\varepsilon^2dxdt\\
&+\lambda^2\mu^2\mathbb{E}\int_{Q}\theta^2\varphi^2R_\varepsilon^2dxdt+\mathbb{E}\int_{Q}\theta_\varepsilon^{-2}y_\varepsilon^2dxdt\\&
\leq \rho\Bigg[\lambda^3\mu^4\mathbb{E}\int_{Q}\theta^2\varphi^3r_\varepsilon^2dxdt+\lambda^2\mu^2\mathbb{E}\int_{Q}\theta^2\varphi^2R_\varepsilon^2dxdt\Bigg]\\
\label{2.00151402}&\quad+C(\rho)\Bigg[\lambda^3\mu^4\mathbb{E}\int_{Q}\theta^2\varphi^3z^2dxdt+\lambda^2\mu^2\mathbb{E}\int_{Q}\theta^2\varphi^2Z^2dxdt\Bigg].
\end{aligned}
\end{align}
By using \eqref{2.061} for equation \eqref{2.04} and the fact that $\theta^2\theta_\varepsilon^{-2}\leq1$, the estimate \eqref{2.00151402} implies that for $\mu\geq C$ and $\lambda\geq C(e^{2\mu\vert\psi\vert_\infty}T+T^2)$, one has
\begin{align}
\begin{aligned}
&\,\frac{1}{\varepsilon}\mathbb{E}\int_G \vert y_\varepsilon(T)\vert^2+\lambda^3\mu^4\mathbb{E}\int_{Q_0}\theta^2\varphi^3r_\varepsilon^2dxdt\\
&+\lambda^2\mu^2\mathbb{E}\int_{Q}\theta^2\varphi^2R_\varepsilon^2dxdt+\mathbb{E}\int_{Q}\theta_\varepsilon^{-2}y_\varepsilon^2dxdt\\
&\leq C\rho\Bigg[\lambda^3\mu^4\mathbb{E}\int_{Q_0}\theta^2\varphi^3r_\varepsilon^2dxdt+\mathbb{E}\int_Q \theta_\varepsilon^{-2}y_\varepsilon^2 dxdt+\lambda^2\mu^2\mathbb{E}\int_{Q}\theta^2\varphi^2R_\varepsilon^2dxdt\Bigg]\\
\label{2.0015}&
\quad+C(\rho)\Bigg[\lambda^3\mu^4\mathbb{E}\int_{Q}\theta^2\varphi^3z^2dxdt+\lambda^2\mu^2\mathbb{E}\int_{Q}\theta^2\varphi^2Z^2dxdt\Bigg].
\end{aligned}
\end{align}
Taking a small enough $\rho>0$ in \eqref{2.0015} and recalling \eqref{2.003}, we obtain
\begin{align}
\begin{aligned}
&\,\lambda^{-3}\mu^{-4}\mathbb{E}\int_{Q}\theta^{-2}\varphi^{-3}u_\varepsilon^2dxdt+\lambda^{-2}\mu^{-2}\mathbb{E}\int_{Q}\theta^{-2}\varphi^{-2}v_\varepsilon^2dxdt\\
&+\mathbb{E}\int_{Q} \theta^{-2}_\varepsilon y_\varepsilon^2dxdt+\frac{1}{\varepsilon}\mathbb{E}\int_{G}\vert y_\varepsilon(T)\vert^2dx\\
\label{2.07}&
\leq C\,\Bigg[\lambda^3\mu^4\mathbb{E}\int_{Q}\theta^2\varphi^3z^2dxdt+\lambda^2\mu^2\mathbb{E}\int_{Q}\theta^2\varphi^2Z^2dxdt\Bigg].
\end{aligned}
\end{align}

On the other hand, by using Itô's formula, we compute $d(\theta^{-2}_\varepsilon\varphi^{-2}y_\varepsilon^2)$, integrating the equality on $Q$ and taking the expectation on both sides, we get
\begin{align}
\begin{aligned}
&\,2 \mathbb{E}\int_Q \theta^{-2}_\varepsilon\varphi^{-2}A\nabla y_\varepsilon\cdot\nabla y_\varepsilon dxdt\\
&=-2 \mathbb{E}\int_Q y_\varepsilon A\nabla y_\varepsilon\cdot\nabla(\theta_\varepsilon^{-2}\varphi^{-2}) dxdt+\mathbb{E}\int_Q (\theta^{-2}_\varepsilon\varphi^{-2})_ty_\varepsilon^2dxdt\\
&\quad\,+2\lambda^3\mu^4\mathbb{E}\int_Q \theta^{-2}_\varepsilon\theta^2\varphi y_\varepsilon z dxdt+2\mathbb{E}\int_{Q_0}\theta^{-2}_\varepsilon\varphi^{-2}y_\varepsilon u_\varepsilon dxdt\\
&\quad\,+\lambda^4\mu^4\mathbb{E}\int_Q \theta^{-2}_\varepsilon\theta^4\varphi^2Z^2dxdt+\mathbb{E}\int_Q \theta^{-2}_\varepsilon\varphi^{-2}v_\varepsilon^2dxdt\\
\label{2.08}&\quad\,+2\lambda^2\mu^2\mathbb{E}\int_Q \theta_\varepsilon^{-2}\theta^2 Zv_\varepsilon dxdt.
\end{aligned}
\end{align}
It is easy to check that for a large enough $\lambda\geq CT^2$, we have
\begin{equation}\label{2.0140}
\vert(\theta^{-2}_\varepsilon\varphi^{-2})_t\vert\leq CT\lambda e^{2\mu\vert\psi\vert_\infty}\theta_\varepsilon^{-2},\qquad\quad\vert\nabla(\theta^{-2}_\varepsilon\varphi^{-2})\vert\leq C\lambda\mu\theta^{-2}_\varepsilon\varphi^{-1}.
\end{equation}
Using inequalities \eqref{2.0140}, assumptions on $A$ and that $\theta_\varepsilon^{-1}\leq\theta^{-1}$, we conclude that for a large $\mu\geq C$ and $\lambda\geq C(T+T^2)$ 
\begin{align}
\begin{aligned}
&\,\mathbb{E}\int_Q \theta_\varepsilon^{-2}\varphi^{-2}\vert\nabla y_\varepsilon\vert^2dxdt\\
&\leq CT\lambda e^{2\mu\vert\psi\vert_\infty}\mathbb{E}\int_Q \theta_\varepsilon^{-2}y_\varepsilon^2dxdt+C\mathbb{E}\int_Q \theta^{-2}\varphi^{-2}v_\varepsilon^2dxdt\\
&\quad+C\lambda^2\mu^2\mathbb{E}\int_Q  \vert  Z\vert\vert v_\varepsilon\vert dxdt+C\lambda\mu\mathbb{E}\int_Q \theta_\varepsilon^{-2}\varphi^{-1}\vert y_\varepsilon\vert\vert\nabla y_\varepsilon\vert dxdt\\
\label{2.09}&\quad+C\lambda^3\mu^4\mathbb{E}\int_Q \theta_\varepsilon^{-1}\theta\varphi\vert z\vert\vert y_\varepsilon\vert dxdt+C\lambda^4\mu^4\mathbb{E}\int_Q \theta^2\varphi^2Z^2dxdt\\
&\quad+C\mathbb{E}\int_{Q}\theta_\varepsilon^{-1}\theta^{-1}\varphi^{-2}\vert y_\varepsilon\vert\vert u_\varepsilon\vert dxdt.
\end{aligned}
\end{align}
By applying Young's inequality on the right-hand side of \eqref{2.09}, one absorbs the gradient term in the fourth integral by using the left-hand side, and hence multiplying the inequality by $\lambda^{-2}\mu^{-2}$, we get for a large $\mu\geq C$ and $\lambda\geq C(T+T^2)$ that
\begin{align}\label{2.010}
\begin{aligned}
&\,\lambda^{-2}\mu^{-2}\mathbb{E}\int_Q \theta_\varepsilon^{-2}\varphi^{-2}\vert\nabla y_\varepsilon\vert^2dxdt\\
&\leq C\mathbb{E}\int_Q \theta_\varepsilon^{-2} y_\varepsilon^2 dxdt+C\lambda^2\mu^4\mathbb{E}\int_Q \theta^2\varphi^2z^2dxdt\\
&\quad
+C\lambda^{-3}\mu^{-4}\mathbb{E}\int_{Q} \theta^{-2}\varphi^{-3}u_\varepsilon^2dxdt+C\lambda^{-2}\mu^{-2}\mathbb{E}\int_Q \theta^{-2}\varphi^{-2}v_\varepsilon^2dxdt\\
&\quad+C\lambda^2\mu^2\mathbb{E}\int_Q \theta^{2}\varphi^2Z^2dxdt.
\end{aligned}
\end{align}
Combining \eqref{2.07} and \eqref{2.010}, we obtain for $\lambda\geq C(e^{2\mu\vert\psi\vert_\infty}T+T^2)$ and $\mu\geq C$
\begin{align}
\begin{aligned}
&\,\lambda^{-3}\mu^{-4}\mathbb{E}\int_{Q}\theta^{-2}\varphi^{-3}u_\varepsilon^2dxdt+\lambda^{-2}\mu^{-2}\mathbb{E}\int_{Q}\theta^{-2}\varphi^{-2}v_\varepsilon^2dxdt+\mathbb{E}\int_{Q} \theta^{-2}_\varepsilon y_\varepsilon^2dxdt\\
&+\lambda^{-2}\mu^{-2}\mathbb{E}\int_Q \theta_\varepsilon^{-2}\varphi^{-2}\vert\nabla y_\varepsilon\vert^2dxdt+\frac{1}{\varepsilon}\mathbb{E}\int_{G}\vert y_\varepsilon(T)\vert^2dx\\
\label{2.00119}&\leq C\,\Bigg[\lambda^3\mu^4\mathbb{E}\int_{Q}\theta^2\varphi^3z^2dxdt+\lambda^2\mu^2\mathbb{E}\int_{Q}\theta^2\varphi^2Z^2dxdt\Bigg].
\end{aligned}
\end{align}
It follows that there exists  
$$(\hat{u},\hat{v},\hat{y})\in L^2_\mathcal{F}(0,T;L^2(G_0))\times L^2_\mathcal{F}(0,T;L^2(G))\times L^2_\mathcal{F}(0,T;H^1_0(G)),$$
such that as $\varepsilon\rightarrow0$,
\begin{align}\label{2.020}
\begin{aligned}
&u_\varepsilon \longrightarrow \hat{u},\,\,\,\textnormal{weakly in}\,\,\,L^2((0,T)\times\Omega;L^2(G_0));\\
&v_\varepsilon \longrightarrow \hat{v},\,\,\,\textnormal{weakly in}\,\,\,L^2((0,T)\times\Omega;L^2(G));\\ 
&y_{\varepsilon} \longrightarrow \hat{y},\,\,\,\textnormal{weakly in}\,\,\,L^2((0,T)\times\Omega;H^1_0(G)).
\end{aligned}
\end{align}
		
Now, let us show that $\hat{y}$ is the solution of equation \eqref{2.1} associated with controls $\hat{u}$ and $\hat{v}$. To prove this fact, let us denote by $\widetilde{y}$ the unique solution to \eqref{2.1} with controls $\hat{u}$ and $\hat{v}$. For any process $f\in L^2_\mathcal{F}(0,T;L^2(G))$, let $(\phi,\Phi)$ be the solution of the following backward stochastic parabolic equation
\begin{equation*}
\begin{cases}
\begin{array}{lcl}
d\phi+\nabla\cdot(A\nabla\phi) \,dt = f \,dt+\Phi \,dW(t) &\textnormal{in}& Q,\\
     \phi=0&\textnormal{on}&\Sigma,\\
					\phi(T)=0&\textnormal{in}&G.
				\end{array}
			\end{cases}
		\end{equation*}
By Itô's formula, computing ``$d(\phi y_\varepsilon)-d(\phi\widetilde{y})$", integrating the equality on $Q$, taking the expectation on both sides and letting $\varepsilon\rightarrow0$, we get
\begin{equation*}
\mathbb{E}\int_Q (\hat{y}-\widetilde{y})f dxdt= 0.
\end{equation*}
It follows that $\hat{y}=\widetilde{y}$ in $Q$, $\mathbb{P}\textnormal{-a.s.}$ Then, we deduce that $\hat{y}$ is the solution of \eqref{2.1} with controls $\hat{u}$ and $\hat{v}$. Finally, combining the uniform estimate \eqref{2.00119} and the weak convergence result \eqref{2.020}, we conclude the null controllability result of \eqref{2.1} and the desired global Carleman estimate \eqref{2.2}.
\end{proof}
\begin{proof}[Proof of Theorem \ref{thm02.1}]
Let $(z,Z)$ be the solution of \eqref{202.3} and $\hat{y}$ be the solution of \eqref{2.1} associated to controls $(\hat{u},\hat{v})$ obtained in Proposition \ref{prop2.4}. By applying Itô's formula in Lemma \ref{lm1.1}, computing $d(\hat{y},z)_{L^2(G)}$, integrating the result on $(0,T)$ and taking the expectation on both sides, we get
\begin{align}
\begin{aligned}
0=&\,\lambda^3\mu^4\mathbb{E}\int_Q \theta^2\varphi^3z^2dxdt+\mathbb{E}\int_Q \mathbbm{1}_{G_0}\hat{u}zdxdt+\mathbb{E}\int_Q 
 F_0\hat{y}dxdt\\
\label{2.2601}&
-\mathbb{E}\int_Q F\cdot\nabla\hat{y}dxdt+\lambda^2\mu^2\mathbb{E}\int_Q \theta^2\varphi^2Z^2dxdt+\mathbb{E}\int_Q \hat{v}Z dxdt.
\end{aligned}
\end{align}
Then, it follows from \eqref{2.2601} that
\begin{align}
\begin{aligned}
&\,\lambda^3\mu^4\mathbb{E}\int_Q \theta^2\varphi^3z^2dxdt
+\lambda^2\mu^2\mathbb{E}\int_Q \theta^2\varphi^2Z^2dxdt\\
\label{2.27012}&=-\mathbb{E}\int_Q \big[\mathbbm{1}_{G_0}\hat{u}z+F_0\hat{y}-F\cdot\nabla\hat{y}+\hat{v}Z\big] dxdt.
\end{aligned}
\end{align}
Using Young's inequality in the right-hand side of \eqref{2.27012}, we have for all $\rho>0$
\begin{align}
\begin{aligned}
&\,\lambda^3\mu^4\mathbb{E}\int_Q \theta^2\varphi^3z^2dxdt
+\lambda^2\mu^2\mathbb{E}\int_Q \theta^2\varphi^2Z^2dxdt\\
&\leq \rho\Bigg[\lambda^{-3}\mu^{-4}\mathbb{E}\int_{Q} \theta^{-2}\varphi^{-3}\hat{u}^2dxdt+\mathbb{E}\int_Q\theta^{-2}\hat{y}^2dxdt\\
&\quad\quad\;+\lambda^{-2}\mu^{-2}\mathbb{E}\int_Q \theta^{-2}\varphi^{-2}\vert\nabla\hat{y}\vert^2dxdt+\lambda^{-2}\mu^{-2}\mathbb{E}\int_Q \theta^{-2}\varphi^{-2}\hat{v}^2dxdt\Bigg]\\
&\quad +C(\rho)\Bigg[\lambda^3\mu^4\mathbb{E}\int_{Q_0} \theta^2\varphi^3z^2dxdt
+\mathbb{E}\int_Q \theta^2 F_0^2dxdt\\
\label{2.3017}&\quad\quad\quad\quad\;\;\,+\lambda^2\mu^2\mathbb{E}\int_Q \theta^2\varphi^2\vert F\vert^2dxdt+\lambda^2\mu^2\mathbb{E}\int_Q \theta^2\varphi^2 Z^2dxdt\Bigg].
\end{aligned}
\end{align}
Now, by using the Carleman inequality \eqref{2.2}, the estimate \eqref{2.3017} provides that for a small enough $\rho>0$, we have
\begin{align}
\begin{aligned}
\lambda^3\mu^4\mathbb{E}\int_Q \theta^2\varphi^3z^2dxdt
\leq&\, C\,\Bigg[\lambda^3\mu^4\mathbb{E}\int_{Q_0} \theta^2\varphi^3z^2dxdt
+\mathbb{E}\int_Q \theta^2 F_0^2dxdt\\
\label{2.014}&\quad\;\;+\lambda^2\mu^2\mathbb{E}\int_Q \theta^2\varphi^2\vert F\vert^2dxdt+\lambda^2\mu^2\mathbb{E}\int_Q \theta^2\varphi^2 Z^2dxdt\Bigg],
\end{aligned}
\end{align}
for $\mu\geq C$ and $\lambda\geq C(e^{2\mu\vert\psi\vert_\infty}T+T^2)$.\\

On the other hand, computing $d(\theta^2\varphi z,z)_{L^2(G)}$ by using Itô's formula in Lemma \ref{lm1.1}, integrating the equality on $(0,T)$ and taking the expectation on both sides, we obtain
\begin{align}
\begin{aligned}
2\mathbb{E}\int_Q \theta^2\varphi A\nabla z\cdot\nabla z dxdt=&\,-\mathbb{E}\int_Q (\theta^2\varphi)_tz^2dxdt-2\mathbb{E}\int_Q zA\nabla z\cdot\nabla(\theta^2\varphi)dxdt\\
\label{2.31012}&\,
-2\mathbb{E}\int_Q \theta^2\varphi F_0z dxdt+2\mathbb{E}\int_Q zF\cdot\nabla(\theta^2\varphi)dxdt\\
&\,+2\mathbb{E}\int_Q \theta^2\varphi F\cdot\nabla z dxdt-\mathbb{E}\int_Q \theta^2\varphi Z^2dxdt.
\end{aligned}
\end{align}
It is easy to see that for $\lambda\geq CT^2$
\begin{equation}\label{2.32012}
    \vert (\theta^2\varphi)_t\vert\leq CT\lambda e^{2\mu\vert\psi\vert_\infty}\theta^2\varphi^3,\,\,\;\qquad\;\; \vert\nabla(\theta^2\varphi)\vert\leq C\lambda\mu\theta^2\varphi^2.
\end{equation}
By assumptions on $A$, inequalities \eqref{2.31012} and \eqref{2.32012} imply that for $\lambda\geq CT^2$, we have
\begin{align}
\begin{aligned}
\mathbb{E}\int_Q \theta^2\varphi\vert\nabla z\vert^2dxdt\leq&\, CT\lambda e^{2\mu\vert\psi\vert_\infty}\mathbb{E}\int_Q \theta^2\varphi^3z^2dxdt\\
&+C\lambda\mu \mathbb{E}\int_Q \theta^2\varphi^2\vert z\vert\vert\nabla z\vert dxdt\\
\label{2.017}&
    +C\mathbb{E}\int_Q \theta^2\varphi\vert F_0\vert\vert z\vert dxdt+C\lambda\mu\mathbb{E}\int_Q \theta^2\varphi^2\vert z\vert\vert F\vert dxdt\\
&+C\mathbb{E}\int_Q \theta^2\varphi\vert F\vert\vert\nabla z\vert dxdt+C\mathbb{E}\int_Q \theta^2\varphi Z^2dxdt.
\end{aligned}
\end{align}
By Young's inequality, \eqref{2.017} implies that for all $\rho>0$
\begin{align}
\begin{aligned}
&\,\mathbb{E}\int_Q \theta^2\varphi\vert\nabla z\vert^2dxdt\\
&\leq \rho\,\mathbb{E}\int_Q \theta^2\varphi\vert\nabla z\vert^2dxdt+C(\rho)\lambda^2\mu^2\mathbb{E}\int_Q \theta^2\varphi^3z^2dxdt\\
&\quad+C(\rho)\mathbb{E}\int_Q \theta^2\varphi\vert F\vert^2dxdt+CT\lambda e^{2\mu\vert\psi\vert_\infty}\mathbb{E}\int_Q \theta^2\varphi^3z^2dxdt\\
\label{2.018}&\quad+
C\lambda\mu^2\mathbb{E}\int_Q \theta^2\varphi^2 z^2dxdt+C\lambda^{-1}\mu^{-2}\mathbb{E}\int_Q \theta^2 F_0^2 dxdt
\\
&\quad+C\lambda^2\mu^2\mathbb{E}\int_Q \theta^2\varphi^3 z^2 dxdt+C\mathbb{E}\int_Q \theta^2\varphi\vert F\vert^2 dxdt+C\mathbb{E}\int_Q \theta^2\varphi Z^2dxdt.
\end{aligned}
\end{align}
By taking a small $\rho>0$, one absorbs the first term on the right-hand side of \eqref{2.018} by using the left-hand side, hence multiplying the obtained inequality by $\lambda\mu^2$, we get
\begin{align}
\begin{aligned}
\lambda\mu^2\mathbb{E}\int_Q \theta^2\varphi\vert\nabla z\vert^2dxdt\leq&\, C\lambda^3\mu^4\mathbb{E}\int_Q \theta^2\varphi^3z^2dxdt+C\lambda\mu^2\mathbb{E}\int_Q \theta^2\varphi\vert F\vert^2dxdt\\
&+CT\lambda^2\mu^2 e^{2\mu\vert\psi\vert_\infty}\mathbb{E}\int_Q \theta^2\varphi^3z^2dxdt\\
&+C\lambda^2\mu^4\mathbb{E}\int_Q \theta^2\varphi^2 z^2dxdt+C\mathbb{E}\int_Q \theta^2 F_0^2 dxdt\\
\label{2.01836}&+C\lambda\mu^2\mathbb{E}\int_Q \theta^2\varphi Z^2dxdt.
\end{aligned}
\end{align}
By choosing a large enough $\lambda\geq C(T+T^2)$, one absorbs the third and fourth terms in the right-hand side of \eqref{2.01836} by using the first term, thus we obtain
\begin{align}
\begin{aligned}
\lambda\mu^2\mathbb{E}\int_Q \theta^2\varphi\vert\nabla z\vert^2dxdt\leq&\, C\Bigg[\lambda^3\mu^4\mathbb{E}\int_{Q} \theta^2\varphi^3z^2dxdt+\mathbb{E}\int_Q \theta^2 F_0^2dxdt\\
\label{2.019}&\quad\;
+\lambda\mu^2\mathbb{E}\int_Q \theta^2\varphi\vert F\vert^2dxdt+\lambda\mu^2\mathbb{E}\int_{Q} \theta^2\varphi Z^2dxdt\Bigg].
\end{aligned}
\end{align}
Finally, by combining \eqref{2.014} and \eqref{2.019} and taking a large $\lambda\geq CT^2$, we deduce the desired Carleman estimate \eqref{1.014}. This completes the proof of Theorem \ref{thm02.1}.
\end{proof}
\section{Global Carleman estimate for forward stochastic parabolic equations}\label{sec3}
In this section, we consider the following forward linear stochastic parabolic equation
	\begin{equation}\label{3.010}
		\begin{cases}
			\begin{array}{ll}
			dz-\nabla\cdot(A\nabla z) dt = (F_1+\nabla\cdot F)dt + F_2 dW(t)	 &\textnormal{in}\,\,Q,\\
				z=0 &\textnormal{on}\,\,\Sigma,\\
			 z(0)=z_0 &\textnormal{in}\,\,G,
			\end{array}
		\end{cases}
	\end{equation}
where $z_0\in L^2_{\mathcal{F}_0}(\Omega;L^2(G))$, $F_1,F_2\in L^2_\mathcal{F}(0,T;L^2(G))$, and $F\in L^2_\mathcal{F}(0,T;L^2(G;\mathbb{R}^N))$. 

Firstly, noting that if $F_2\equiv0$, equation \eqref{3.010} becomes a random parabolic equation, then inspiring from the well-known Carleman estimate for deterministic parabolic equations in \cite[Lemma 2.1]{fernandez2006global} and \eqref{2.2012}, we have the following Carleman estimate.
\begin{lm}\label{lm3.1}
There exist constants $C>0$, $\mu_0>1$, and $\lambda_0>1$ depending only on $G$, $G_0$ and $A$ such that for all $\mu\geq\mu_0$ and $\lambda\geq\lambda_0$, $F_1\in L^2_\mathcal{F}(0,T;L^2(G))$, $F\in L^2_\mathcal{F}(0,T;L^2(G;\mathbb{R}^N))$, and $z_0\in L^2_{\mathcal{F}_0}(\Omega;L^2(G))$, the corresponding solution of \eqref{3.010} $($with $F_2\equiv0$$)$ satisfies
\begin{align}
\begin{aligned}\label{3.20102}
&\,\lambda^3\mu^4\mathbb{E}\int_Q \theta^2\varphi^3z^2dxdt+\lambda\mu^2\mathbb{E}\int_Q \theta^2\varphi \vert\nabla z\vert^2dxdt\\
&\leq\, C\Bigg[\lambda^3\mu^4\mathbb{E}\int_{Q_0} \theta^2\varphi^3z^2dxdt+\mathbb{E}\int_Q\theta^2F_1^2dxdt+\lambda^2\mu^2\mathbb{E}\int_Q \theta^2\varphi^2 \vert F\vert^2 dxdt\Bigg].
\end{aligned}
\end{align}
\end{lm}

Throughout this section, we set $\mu=\mu_0$ and $\lambda\geq\lambda_0$, where $\mu_0$ and $\lambda_0$ are given in the above Lemma and we also choose the same weight functions $\theta$ and $\varphi$ as defined in \eqref{2.2012}. For any $F_1,F_2\in L_\mathcal{F}^2(0,T;L^2(G))$, $F\in L_\mathcal{F}^2(0,T;L^2(G;\mathbb{R}^N))$ and $z_0\in L^2_{\mathcal{F}_0}(\Omega;L^2(G))$, let $z$ be the corresponding solution to \eqref{3.010}. Let us now consider $(r,R)$ be the solution of the following backward stochastic parabolic equation with a control $u\in L_\mathcal{F}^2(0,T;L^2(G_0))$
\begin{equation}\label{3.6}
\begin{cases}
\begin{array}{ll}
dr + \nabla\cdot(A\nabla r) \,dt = (\lambda^3\theta^2\varphi^3z + \mathbbm{1}_{G_0}u) \,dt + R \,dW(t) &\textnormal{in}\,\,Q,\\
r=0 &\textnormal{on}\,\,\Sigma,\\
r(T)=0 &\textnormal{in}\,\,G.
\end{array}
\end{cases}
\end{equation}

Using Lemma \ref{lm3.1} and repeating some steps from \cite[Proposition 2.2]{liu2014global}, we have the following result.
\begin{prop}\label{prop3.1}
There exists a control $\hat{u}\in L^2_\mathcal{F}(0,T;L^2(G_0))$ such that the associated solution $(\hat{r},\hat{R})\in (L^2_\mathcal{F}(\Omega;C([0,T];L^2(G)))\bigcap L^2_\mathcal{F}(0,T;H^1_0(G)))\times L^2_\mathcal{F}(0,T;L^2(G))$ to equation \eqref{3.6} satisfies $\hat{r}(0,\cdot)=0$ in $G$, $\mathbb{P}\textnormal{-a.s.}$ Moreover, there exists a constant $C>0$ depending only on $G$, $G_0$, $\mu_0$ and $A$ so that
\begin{align}
\begin{aligned}
&\,\lambda^{-3} \mathbb{E}\int_{Q} \theta^{-2}\varphi^{-3}\hat{u}^2 dx dt +\mathbb{E}\int_Q \theta^{-2}\hat{r}^2 dx dt\\
&+\lambda^{-2}\mathbb{E}\int_Q \theta^{-2}\varphi^{-2}\vert\nabla\hat{r}\vert^2dx dt+\lambda^{-2}\mathbb{E}\int_Q \theta^{-2}\varphi^{-2}\hat{R}^2dx dt\\
\label{3.7}&
\leq C\,\lambda^3 \mathbb{E}\int_Q \theta^2\varphi^3 z^2 dxdt,
\end{aligned}
\end{align}
for all $\lambda\geq C(T+T^2)$.
\end{prop}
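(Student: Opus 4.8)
The plan is to transpose the penalization/duality argument from the proof of Proposition~\ref{prop2.4} (and \cite[Proposition~2.2]{liu2014global}), interchanging the roles of the forward and backward equations so that the Carleman estimate of Lemma~\ref{lm3.1} is applied to the \emph{forward} adjoint of \eqref{3.6}. Fix $\mu=\mu_0$ and, for $\varepsilon>0$, let $\theta_\varepsilon=e^{\lambda\alpha_\varepsilon}$ with $\alpha_\varepsilon(t,x)=((t+\varepsilon)(T-t+\varepsilon))^{-1}(e^{\mu\psi(x)}-e^{2\mu|\psi|_\infty})$, so that $\theta_\varepsilon\geq\theta$ and $\theta_\varepsilon$ is continuous and bounded below on $\overline{Q}$. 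Consider the minimization problem $\inf\{J_\varepsilon(u):u\in\mathcal{U}\}$, where
\begin{equation*}
J_\varepsilon(u)=\frac12\mathbb{E}\int_Q\theta_\varepsilon^{-2}r^2\,dxdt+\frac{\lambda^{-3}}{2}\mathbb{E}\int_{Q_0}\theta^{-2}\varphi^{-3}u^2\,dxdt+\frac{1}{2\varepsilon}\mathbb{E}\int_G|r(0)|^2\,dx,
\end{equation*}
$(r,R)=(r(u),R(u))$ is the solution of \eqref{3.6} with control $u$, and $\mathcal{U}=\{u\in L^2_\mathcal{F}(0,T;L^2(G_0)):\mathbb{E}\int_{Q_0}\theta^{-2}\varphi^{-3}u^2\,dxdt<\infty\}$. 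As in Proposition~\ref{prop2.4}, $J_\varepsilon$ is continuous, strictly convex and coercive, hence admits a unique minimizer $u_\varepsilon\in\mathcal{U}$.

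Writing the Euler--Lagrange equation for $u_\varepsilon$ and performing the standard duality computation (cf.\ \cite{imanuvilov2003carleman,lions1972some}), one identifies $u_\varepsilon=-\mathbbm{1}_{G_0}\lambda^3\theta^2\varphi^3 p_\varepsilon$, where $p_\varepsilon$ solves the forward equation
\begin{equation*}
\begin{cases}
dp_\varepsilon-\nabla\cdot(A\nabla p_\varepsilon)\,dt=-\theta_\varepsilon^{-2}r_\varepsilon\,dt&\textnormal{in }Q,\\
p_\varepsilon=0&\textnormal{on }\Sigma,\\
p_\varepsilon(0)=-\tfrac1\varepsilon r_\varepsilon(0)&\textnormal{in }G,
\end{cases}
\end{equation*}
$(r_\varepsilon,R_\varepsilon)$ being the state for $u_\varepsilon$ (since $\mathcal{F}_0$ is trivial, $r_\varepsilon(0)$ is deterministic and this adjoint equation carries no diffusion term, which is why no penalization of $R_\varepsilon$ is needed). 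Computing $d(r_\varepsilon p_\varepsilon)_{L^2(G)}$ by the Itô formula of Lemma~\ref{lm1.1}, the second-order terms cancel after integration by parts, and using $r_\varepsilon(T)=0$, $p_\varepsilon(0)=-\tfrac1\varepsilon r_\varepsilon(0)$ together with the formula for $u_\varepsilon$ one obtains
\begin{equation*}
\frac1\varepsilon\mathbb{E}\int_G|r_\varepsilon(0)|^2dx+\lambda^3\mathbb{E}\int_{Q_0}\theta^2\varphi^3 p_\varepsilon^2dxdt+\mathbb{E}\int_Q\theta_\varepsilon^{-2}r_\varepsilon^2dxdt=\lambda^3\mathbb{E}\int_Q\theta^2\varphi^3 z\,p_\varepsilon\,dxdt.
\end{equation*}
I would then bound the right-hand side by Young's inequality, apply the Carleman estimate \eqref{3.20102} to $p_\varepsilon$ (with $F_1=-\theta_\varepsilon^{-2}r_\varepsilon$, $F\equiv0$, and using $\theta^2\theta_\varepsilon^{-4}\le\theta_\varepsilon^{-2}$) to dominate $\lambda^3\mathbb{E}\int_Q\theta^2\varphi^3 p_\varepsilon^2$ by $C\lambda^3\mathbb{E}\int_{Q_0}\theta^2\varphi^3 p_\varepsilon^2+C\mathbb{E}\int_Q\theta_\varepsilon^{-2}r_\varepsilon^2$, and finally take the free parameter in Young small enough to absorb the $p_\varepsilon$- and $r_\varepsilon$-terms into the left-hand side. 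Recalling $\lambda^{-3}\mathbb{E}\int_Q\theta^{-2}\varphi^{-3}u_\varepsilon^2=\lambda^3\mathbb{E}\int_{Q_0}\theta^2\varphi^3 p_\varepsilon^2$, this yields a bound on $\tfrac1\varepsilon\mathbb{E}\int_G|r_\varepsilon(0)|^2+\lambda^{-3}\mathbb{E}\int_Q\theta^{-2}\varphi^{-3}u_\varepsilon^2+\mathbb{E}\int_Q\theta_\varepsilon^{-2}r_\varepsilon^2$ by $C\lambda^3\mathbb{E}\int_Q\theta^2\varphi^3 z^2\,dxdt$, uniformly in $\varepsilon$.

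For the gradient and $R_\varepsilon$ terms I would apply the Itô formula to $d(\theta_\varepsilon^{-2}\varphi^{-2}r_\varepsilon^2)$, integrate over $Q$ and take expectations; integrating by parts in the $\nabla\cdot(A\nabla r_\varepsilon)$ term produces $2\mathbb{E}\int_Q\theta_\varepsilon^{-2}\varphi^{-2}A\nabla r_\varepsilon\cdot\nabla r_\varepsilon$, which by ellipticity of $A$ controls $\mathbb{E}\int_Q\theta_\varepsilon^{-2}\varphi^{-2}|\nabla r_\varepsilon|^2$, while the martingale part of \eqref{3.6} contributes $+\mathbb{E}\int_Q\theta_\varepsilon^{-2}\varphi^{-2}R_\varepsilon^2$ with a favourable sign. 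Using the weight bounds $|(\theta_\varepsilon^{-2}\varphi^{-2})_t|\le CT\lambda e^{2\mu|\psi|_\infty}\theta_\varepsilon^{-2}$ and $|\nabla(\theta_\varepsilon^{-2}\varphi^{-2})|\le C\lambda\mu\theta_\varepsilon^{-2}\varphi^{-1}$ (as in \eqref{2.0140}), Young's inequality, $\theta_\varepsilon^{-1}\le\theta^{-1}$, and multiplying by $\lambda^{-2}$, one bounds $\lambda^{-2}\mathbb{E}\int_Q\theta_\varepsilon^{-2}\varphi^{-2}(|\nabla r_\varepsilon|^2+R_\varepsilon^2)$ by the quantities already estimated plus a small multiple of itself — exactly the argument of \eqref{2.08}--\eqref{2.010}, which forces the restriction $\lambda\ge C(T+T^2)$. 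Summing up, the left-hand side of \eqref{3.7} (with $\theta_\varepsilon$ in place of $\theta$ in the $r_\varepsilon,\nabla r_\varepsilon,R_\varepsilon$ terms), together with $\tfrac1\varepsilon\mathbb{E}\int_G|r_\varepsilon(0)|^2$, is bounded by $C\lambda^3\mathbb{E}\int_Q\theta^2\varphi^3 z^2\,dxdt$ uniformly in $\varepsilon$.

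Finally, the uniform bound gives, along a subsequence, weak limits $u_\varepsilon\rightharpoonup\hat u$, $r_\varepsilon\rightharpoonup\hat r$ in $L^2((0,T)\times\Omega;H^1_0(G))$ and $R_\varepsilon\rightharpoonup\hat R$ in $L^2((0,T)\times\Omega;L^2(G))$; testing \eqref{3.6} against the solution of a suitable auxiliary forward equation driven by an arbitrary $f\in L^2_\mathcal{F}(0,T;L^2(G))$ (as in the identification step of Proposition~\ref{prop2.4}) shows that $(\hat r,\hat R)$ is the solution of \eqref{3.6} with control $\hat u$, hence has the asserted regularity by the well-posedness of \eqref{3.6}; moreover $\tfrac1\varepsilon\mathbb{E}\int_G|r_\varepsilon(0)|^2\le C$ forces $\mathbb{E}\int_G|r_\varepsilon(0)|^2\to0$, so $\hat r(0,\cdot)=0$ in $G$, $\mathbb{P}$-a.s. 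Passing to the limit in the uniform estimate by weak lower semicontinuity of the convex weighted norms, combined with monotone convergence $\theta_\varepsilon\downarrow\theta$ for the $\hat r,\nabla\hat r,\hat R$ terms, gives \eqref{3.7}. Since every step is a faithful transcription of Proposition~\ref{prop2.4} with the forward/backward roles swapped, the only genuinely new ingredient is that the Carleman estimate \eqref{3.20102} is invoked for the forward adjoint $p_\varepsilon$; the point demanding the most care is keeping the source term $\theta^2F_1^2=\theta^2\theta_\varepsilon^{-4}r_\varepsilon^2$ controlled by $\theta_\varepsilon^{-2}r_\varepsilon^2$ so that it can be absorbed, and tracking the powers of $T$ in the weight estimates of \eqref{2.301} to recover the sharp condition on $\lambda$.
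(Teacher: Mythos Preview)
Your proposal is correct and follows precisely the route the paper intends: the paper does not write out a proof of Proposition~\ref{prop3.1} but simply says ``Using Lemma~\ref{lm3.1} and repeating some steps from \cite[Proposition~2.2]{liu2014global}'', and what you have written is exactly that---the argument of Proposition~\ref{prop2.4} with the forward/backward roles exchanged, the Carleman estimate \eqref{3.20102} applied to the (diffusion-free) forward adjoint $p_\varepsilon$, and the energy identity for $\theta_\varepsilon^{-2}\varphi^{-2}r_\varepsilon^2$ used to recover the $\nabla r_\varepsilon$ and $R_\varepsilon$ terms. One small remark: the absence of a diffusion term in the adjoint $p_\varepsilon$ is a structural feature of the optimality system (the cost $J_\varepsilon$ contains no $R$-term), not a consequence of $\mathcal{F}_0$ being trivial; the triviality of $\mathcal{F}_0$ only ensures that the initial datum $p_\varepsilon(0)=-\varepsilon^{-1}r_\varepsilon(0)$ is deterministic.
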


The main result of this section is the following global Carleman estimate for \eqref{3.010}, whose proof is based on Proposition \ref{prop3.1}.
\begin{thm}\label{thm1.1}
There exists a positive constant $C=C(G,G_0,\mu_0,A)$ such that for all  $F_1,F_2\in L^2_\mathcal{F}(0,T;L^2(G))$, $F\in L^2_\mathcal{F}(0,T;L^2(G;\mathbb{R}^N))$ and $z_0\in L^2_{\mathcal{F}_0}(\Omega;L^2(G))$, the weak solution $z$ of \eqref{3.010} satisfies that
\begin{align}
\begin{aligned}
    &\,\lambda^3\mathbb{E}\int_Q \theta^2\varphi^3z^2dxdt + \lambda\mathbb{E}\int_Q \theta^2\varphi\vert\nabla z\vert^2dxdt\\
&\leq C\Bigg[\lambda^3\mathbb{E}\int_{Q_0} \theta^2\varphi^3z^2dxdt+\mathbb{E}\int_Q \theta^2F_1^2dxdt\\
\label{3.202002}&\quad\quad\,\;+\lambda^2\mathbb{E}\int_Q \theta^2\varphi^2F_2^2dxdt+\lambda^2\mathbb{E}\int_Q \theta^2\varphi^2\vert F\vert^2dxdt\Bigg],
\end{aligned}
\end{align}
for all $\lambda\geq C(T+T^2)$.
\end{thm}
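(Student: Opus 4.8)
The plan is to reproduce, with the forward and backward equations exchanged, the duality argument that produced Theorem~\ref{thm02.1}: the auxiliary null‑controllability result of Proposition~\ref{prop3.1} for the backward system~\eqref{3.6} (which itself rests on the deterministic‑type Carleman estimate of Lemma~\ref{lm3.1}) plays here the role that Proposition~\ref{prop2.4} played in Section~\ref{sec2}.

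First I would fix $z$, the weak solution of \eqref{3.010}, and $(\hat{r},\hat{R})$, the controlled solution of \eqref{3.6} furnished by Proposition~\ref{prop3.1}, so that $\hat{r}(0,\cdot)=\hat{r}(T,\cdot)=0$ in $G$, $\mathbb{P}$-a.s. Applying the weak It\^o formula of Lemma~\ref{lm1.1} to $d(\hat{r},z)_{L^2(G)}$, integrating over $(0,T)$ and taking expectations, the endpoint terms drop out because $\hat{r}(0)=\hat{r}(T)=0$; the two second‑order contributions $\langle\nabla\cdot(A\nabla\hat{r}),z\rangle$ and $\langle\nabla\cdot(A\nabla z),\hat{r}\rangle$, read in the weak sense of Lemma~\ref{lm1.2}, cancel by the symmetry of $A$ (the operators enter \eqref{3.010} and \eqref{3.6} with opposite signs); the weak divergence source term contributes $-\,\mathbb{E}\int_Q F\cdot\nabla\hat{r}\,dxdt$; and the It\^o cross‑term contributes $\mathbb{E}\int_Q\hat{R}F_2\,dxdt$. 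This should leave the identity
\begin{align*}
\lambda^3\mathbb{E}\int_Q\theta^2\varphi^3z^2\,dxdt=-\mathbb{E}\int_{Q_0}\hat{u}\,z\,dxdt-\mathbb{E}\int_Q F_1\hat{r}\,dxdt+\mathbb{E}\int_Q F\cdot\nabla\hat{r}\,dxdt-\mathbb{E}\int_Q\hat{R}F_2\,dxdt.
\end{align*}
Next I would estimate the four terms on the right by Young's inequality, splitting the weights so that $\hat{u}$ carries $\lambda^{-3/2}\theta^{-1}\varphi^{-3/2}$, $\hat{r}$ carries $\theta^{-1}$, and $\nabla\hat{r}$ and $\hat{R}$ each carry $\lambda^{-1}\theta^{-1}\varphi^{-1}$; the complementary factors are precisely $\lambda^{3/2}\theta\varphi^{3/2}z$ over $Q_0$, $\theta F_1$, $\lambda\theta\varphi F$ and $\lambda\theta\varphi F_2$. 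By the control estimate \eqref{3.7} the block involving $\hat{u},\hat{r},\nabla\hat{r},\hat{R}$ is bounded by $C\lambda^3\mathbb{E}\int_Q\theta^2\varphi^3z^2\,dxdt$, so taking the Young parameter small enough absorbs it into the left‑hand side and gives
\begin{align*}
\lambda^3\mathbb{E}\int_Q\theta^2\varphi^3z^2\,dxdt\leq C\Big[\lambda^3\mathbb{E}\int_{Q_0}\theta^2\varphi^3z^2\,dxdt+\mathbb{E}\int_Q\theta^2F_1^2\,dxdt+\lambda^2\mathbb{E}\int_Q\theta^2\varphi^2|F|^2\,dxdt+\lambda^2\mathbb{E}\int_Q\theta^2\varphi^2F_2^2\,dxdt\Big]
\end{align*}
for $\lambda\geq C(T+T^2)$.

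To recover the gradient term I would run the usual weighted energy estimate: apply It\^o's formula to $\theta^2\varphi z^2$ (the weight being deterministic and $C^1$ in $t$), integrate over $Q$ and take expectations; the endpoint terms vanish since $\theta^2\varphi\to0$ as $t\to0^+,T^-$. Integrating by parts in $2\int_G\theta^2\varphi z\,\nabla\cdot(A\nabla z)$ and in $2\int_G\theta^2\varphi z\,\nabla\cdot F$ and using the coercivity of $A$ produces a lower bound $2\beta\,\mathbb{E}\int_Q\theta^2\varphi|\nabla z|^2$ controlled, just as in \eqref{2.31012}, by the terms in $(\theta^2\varphi)_t z^2$, $z\,A\nabla z\cdot\nabla(\theta^2\varphi)$, $\theta^2\varphi F_1z$, $\theta^2\varphi F\cdot\nabla z$, $z\,F\cdot\nabla(\theta^2\varphi)$ and $\theta^2\varphi F_2^2$. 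Using the bounds \eqref{2.32012} on $|(\theta^2\varphi)_t|$ and $|\nabla(\theta^2\varphi)|$, absorbing the $\rho\,\mathbb{E}\int_Q\theta^2\varphi|\nabla z|^2$ pieces by Young's inequality, multiplying by $\lambda$, and choosing $\lambda\geq C(T+T^2)$ large enough to swallow the spurious factor $T\lambda e^{2\mu_0|\psi|_\infty}$ coming from $(\theta^2\varphi)_t$, I expect
\begin{align*}
\lambda\mathbb{E}\int_Q\theta^2\varphi|\nabla z|^2\,dxdt\leq C\Big[\lambda^3\mathbb{E}\int_Q\theta^2\varphi^3z^2\,dxdt+\mathbb{E}\int_Q\theta^2F_1^2\,dxdt+\lambda^2\mathbb{E}\int_Q\theta^2\varphi^2|F|^2\,dxdt+\lambda^2\mathbb{E}\int_Q\theta^2\varphi^2F_2^2\,dxdt\Big].
\end{align*}
Adding this to the zeroth‑order estimate and using the latter once more to turn $\mathbb{E}\int_Q\theta^2\varphi^3z^2$ into $\mathbb{E}\int_{Q_0}\theta^2\varphi^3z^2$ plus the source terms then yields \eqref{3.202002}.

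Since Lemma~\ref{lm3.1} and Proposition~\ref{prop3.1} already carry all the work specific to the convection and weak‑divergence structure, the main obstacle is careful bookkeeping rather than a new idea: one must keep the weak interpretation of $\nabla\cdot(A\nabla\hat{r})$, $\nabla\cdot(A\nabla z)$ and $\nabla\cdot F$ consistent throughout the It\^o pairing (so that Lemmas~\ref{lm1.1} and~\ref{lm1.2} apply and the two second‑order terms genuinely cancel), and one must track the powers of $\lambda$ and of $T$ — in particular the factor $T\lambda e^{2\mu_0|\psi|_\infty}$ from $(\theta^2\varphi)_t$ — so that the smallness of the Young parameters and the lower bound $\lambda\geq C(T+T^2)$ remain compatible. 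The hard part, if any, is precisely this last point: checking that every bad constant appearing after the energy estimate can be absorbed by enlarging $\lambda$ while keeping the final threshold of the stated form $C(T+T^2)$.
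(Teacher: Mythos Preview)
Your proposal is correct and follows essentially the same route as the paper: the duality identity from pairing $z$ with the controlled backward solution $(\hat r,\hat R)$ of Proposition~\ref{prop3.1}, Young's inequality together with \eqref{3.7} to get the zeroth-order Carleman bound, and then a weighted energy estimate on $\theta^2\varphi z^2$ to recover the gradient term. Your only stated worry---the factor $T\lambda e^{2\mu_0|\psi|_\infty}$ from $(\theta^2\varphi)_t$---is harmless here, since $\mu=\mu_0$ is fixed throughout Section~\ref{sec3} and $e^{2\mu_0|\psi|_\infty}$ is simply absorbed into $C=C(G,G_0,\mu_0,A)$, which is exactly how the paper handles it in \eqref{2.7}.
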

\begin{proof}
Let $(\hat{r},\hat{R})$ be the solution of equation \eqref{3.6} with control $\hat{u}$ obtained in Proposition \ref{prop3.1}. Using systems \eqref{3.010} and \eqref{3.6}, applying Itô's formula in Lemma \ref{lm1.1}, integrating the result on $(0,T)$ and taking the mean value on both sides, we get by Lemma \ref{lm1.2} that
\begin{equation}\label{3.9}
    \lambda^3\mathbb{E}\int_Q \theta^2\varphi^3z^2 dxdt = -\mathbb{E}\int_Q (\hat{r}F_1-\nabla\hat{r}\cdot F+\mathbbm{1}_{G_0}\hat{u}z+\hat{R}F_2)dxdt.
\end{equation}
By Young's inequality, (\ref{3.9}) implies that for all $\rho>0$
\begin{align}
\begin{aligned}
&\,\lambda^3\mathbb{E}\int_Q \theta^2\varphi^3z^2 dxdt\\
&\leq \rho\Bigg[\mathbb{E}\int_Q \theta^{-2}\hat{r}^2 dxdt+\lambda^{-2}\mathbb{E}\int_Q \theta^{-2}\varphi^{-2}\vert\nabla\hat{r}\vert^2 dxdt\\
&\qquad\,+\lambda^{-3}\mathbb{E}\int_{Q} \theta^{-2}\varphi^{-3} \hat{u}^2 dxdt+\lambda^{-2}\mathbb{E}\int_Q \theta^{-2}\varphi^{-2} \hat{R}^2 dxdt\Bigg]\\
\label{2.550}&\quad+C(\rho)\Bigg[\mathbb{E}\int_Q \theta^2F_1^2 dxdt+\lambda^2\mathbb{E}\int_Q \theta^2\varphi^2\vert F\vert^2 dxdt\\
&\qquad\qquad\;\;+\lambda^3\mathbb{E}\int_{Q_0} \theta^2\varphi^3z^2 dxdt+\lambda^2\mathbb{E}\int_Q \theta^2\varphi^2 F_2^2 dxdt\Bigg].
\end{aligned}
\end{align}
Using \eqref{3.7} and choosing a small $\rho>0$, the inequality \eqref{2.550} implies that
\begin{align}
\begin{aligned}
\lambda^3\mathbb{E}\int_Q \theta^2\varphi^3z^2 dxdt\leq&\, C\Bigg[\lambda^3\mathbb{E}\int_{Q_0} \theta^2\varphi^3z^2 dxdt+\mathbb{E}\int_Q \theta^2F_1^2 dxdt\\
\label{2.5}&\quad\,
+\lambda^2\mathbb{E}\int_Q \theta^2\varphi^2\vert F\vert^2 dxdt+\lambda^2\mathbb{E}\int_Q \theta^2\varphi^2 F_2^2 dxdt\Bigg],
\end{aligned}
\end{align}
for $\lambda\geq C(T+T^2)$.

On the other hand, using Itô's formula, we compute $d(\theta^2\varphi z,z)_{L^2(G)}$, integrating the equality on $(0,T)$ and taking the expectation on both sides, we get
\begin{align}
\begin{aligned}
2\mathbb{E}\int_Q \theta^2\varphi A\nabla z\cdot\nabla zd xdt=&\,\mathbb{E}\int_Q (\theta^2\varphi)_tz^2dxdt-2\mathbb{E}\int_Q zA\nabla z\cdot\nabla(\theta^2\varphi) dxdt\\
\label{2.6}& -2\mathbb{E}\int_Q zF\cdot\nabla(\theta^2\varphi)dxdt-2\mathbb{E}\int_Q \theta^2\varphi F\cdot\nabla z dxdt\\
&+2\mathbb{E}\int_Q \theta^2\varphi zF_1dxdt+\mathbb{E}\int_Q \theta^2\varphi F_2^2dxdt.
\end{aligned}
\end{align}
We have for $\lambda\geq CT^2$
\begin{equation}\label{2.7}
    \vert (\theta^2\varphi)_t\vert\leq CT\lambda\theta^2\varphi^3,\;\qquad\quad \vert\nabla(\theta^2\varphi)\vert\leq C\lambda\theta^2\varphi^2.
\end{equation}
From \eqref{2.6}, \eqref{2.7} and the assumptions on $A$, we get
\begin{align}
\begin{aligned}
&\,\mathbb{E}\int_Q \theta^2\varphi\vert\nabla z\vert^2dxdt\\
&\leq CT\lambda \mathbb{E}\int_Q \theta^2\varphi^3z^2dxdt+C\lambda\mathbb{E}\int_Q \theta^2\varphi^2\vert z\vert\vert\nabla z\vert dxdt\\
\label{2.8}&
\quad+C\lambda\mathbb{E}\int_Q \theta^2\varphi^2\vert F\vert\vert z\vert dxdt+C\mathbb{E}\int_Q \theta^2\varphi\vert F\vert\vert\nabla z\vert dxdt\\
& \quad+C\mathbb{E}\int_Q \theta^2\varphi\vert F_1\vert\vert z\vert dxdt+C\mathbb{E}\int_Q \theta^2\varphi F_2^2dxdt.
\end{aligned}
\end{align}
Applying Young's inequality on the right-hand side of \eqref{2.8}, absorbing the gradient terms by using the left-hand side, we arrive at 
\begin{align}
\begin{aligned}
\mathbb{E}\int_Q \theta^2\varphi\vert\nabla z\vert^2dxdt\leq&\, C\Bigg[T\lambda \mathbb{E}\int_Q \theta^2\varphi^3z^2dxdt+\lambda^2\mathbb{E}\int_Q \theta^2\varphi^3z^2dxdt\\
\label{2.9}&\quad\;
+\mathbb{E}\int_Q \theta^2\varphi\vert F\vert^2dxdt+\lambda^{-1}\mathbb{E}\int_Q \theta^2 F_1^2dxdt\\
&\quad\;+\lambda\mathbb{E}\int_Q \theta^2\varphi^2z^2dxdt+\mathbb{E}\int_Q \theta^2\varphi F_2^2dxdt\Bigg].
\end{aligned}
\end{align}
Multiplying \eqref{2.9} by $\lambda$, combining the new inequality with \eqref{2.5} and taking a large enough $\lambda\geq C(T+T^2)$, we get \eqref{3.202002}. This concludes the proof of Theorem \ref{thm1.1}.
\end{proof}
\begin{rmk}
In the above proof, we may notice that $A$ needs only to be $W^{1,\infty}$ in space.
\end{rmk}
\section{Observability problems}\label{sec4}
This section is devoted to establishing the observability inequalities of the adjoint equations \eqref{1.3} and \eqref{1.5}. Firstly, let us prove the observability inequality \eqref{1.04}.
\begin{thm}\label{thm4.1}
For all $z_T\in L^2_{\mathcal{F}_T}(\Omega;L^2(G))$, the corresponding solution $(z,Z)$ to equation \eqref{1.3} satisfies the observability inequality \eqref{1.04}.
\end{thm}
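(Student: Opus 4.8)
The observability inequality \eqref{1.04} is a weighted-energy consequence of the Carleman estimate \eqref{1.014} applied to the adjoint equation \eqref{1.3}, exactly as in the classical deterministic and stochastic schemes; the only care needed is in matching \eqref{1.3} to the abstract equation \eqref{202.3} and tracking the dependence of the constants on $T$ and the potentials so that the final constant has the claimed form $e^{CK}$.

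First I would rewrite \eqref{1.3} in the form \eqref{202.3}: the drift of \eqref{1.3} is $(-a_1 z - a_2 Z + \nabla\cdot(zB_1+ZB_2))\,dt$, so I set $F_0 = -a_1 z - a_2 Z$ and $F = zB_1 + ZB_2$, both of which lie in the required spaces by the well-posedness of \eqref{1.3} and the boundedness of $a_1,a_2,B_1,B_2$. Theorem \ref{thm02.1} then gives, for $\mu\ge C$ and $\lambda\ge C(e^{2\mu|\psi|_\infty}T+T^2)$,
\begin{align*}
&\lambda^3\mu^4\mathbb{E}\int_Q \theta^2\varphi^3 z^2\,dxdt + \lambda\mu^2\mathbb{E}\int_Q \theta^2\varphi|\nabla z|^2\,dxdt\\
&\quad\le C\Bigg[\lambda^3\mu^4\mathbb{E}\int_{Q_0}\theta^2\varphi^3 z^2\,dxdt + \mathbb{E}\int_Q \theta^2(|a_1|_\infty^2 z^2 + |a_2|_\infty^2 Z^2)\,dxdt\\
&\qquad\qquad + \lambda^2\mu^2\mathbb{E}\int_Q \theta^2\varphi^2(|B_1|_\infty^2 z^2 + |B_2|_\infty^2 Z^2)\,dxdt + \lambda^2\mu^2\mathbb{E}\int_Q \theta^2\varphi^2 Z^2\,dxdt\Bigg].
\end{align*}
Using $\varphi\ge CT^{-2}$ so that $\theta^2\le C\lambda^3\mu^4 T^6\theta^2\varphi^3$ and $\theta^2\varphi^2\le C\lambda\mu^2 T^2\theta^2\varphi^3$, I absorb the lower-order $z^2$-terms on the right into the left-hand side by fixing $\mu=\mu_*$ large and then choosing $\lambda$ large enough that $\lambda^3\mu^4$ dominates $C(|a_1|_\infty^2 + \lambda^2\mu^2 T^2(1+|B_1|_\infty^2))$; concretely $\lambda\ge C(1+T^2|B_1|_\infty)$ combined with the earlier threshold suffices. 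The $Z^2$-terms cannot be absorbed (they are the observation in \eqref{1.04}), so they stay on the right, multiplied by at most $C\lambda^2\mu^2(1+|a_2|_\infty^2 + |B_2|_\infty^2)$.

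Next I would drop the gradient term on the left and restrict the time integral of the first term to the subinterval $[T/4,3T/4]$, on which $\theta^2\varphi^3$ is bounded below by a constant of the form $c\,T^{-6}e^{-C\lambda e^{2\mu_*|\psi|_\infty}T^{-1}}$ (this is where the $T^{-1}$ scaling in the exponent, as opposed to $T^{-4}$, enters — one evaluates $\alpha$ and $\varphi$ at $t=T/2$). On the right-hand side I bound $\theta^2\varphi^3$ and $\theta^2\varphi^2$ from above by $C\,T^{-6}e^{C\lambda e^{2\mu_*|\psi|_\infty}T^{-1}}$ (using $\varphi\le C T^{-2}e^{\mu_*|\psi|_\infty}$ on all of $Q$). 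This produces
\begin{equation*}
\mathbb{E}\int_{T/4}^{3T/4}\!\!\int_G z^2\,dxdt \le e^{C\lambda e^{2\mu_*|\psi|_\infty}T^{-1}}\Bigg[\mathbb{E}\int_{Q_0} z^2\,dxdt + \mathbb{E}\int_Q Z^2\,dxdt\Bigg].
\end{equation*}
Finally, a standard energy estimate for the backward equation \eqref{1.3} — obtained by applying Itô's formula (Lemma \ref{lm1.1}) to $|z(t)|_{L^2(G)}^2$, using the ellipticity of $A$, Lemma \ref{lm1.2} to handle the weak divergence term $\nabla\cdot(zB_1+ZB_2)$ via $\langle\nabla\cdot(zB_1+ZB_2),z\rangle = -\int_G(zB_1+ZB_2)\cdot\nabla z\,dx$, Young's inequality to absorb $\nabla z$ into the dissipation, and Grönwall — gives $|z(0)|_{L^2_{\mathcal{F}_0}(\Omega;L^2(G))}^2 \le e^{C(T|a_1|_\infty + T|a_2|_\infty^2 + T|B_1|_\infty^2 + T|B_2|_\infty^2 + 1)}|z(t)|_{L^2(\Omega;L^2(G))}^2 + (\text{same exponential})\,\mathbb{E}\int_Q Z^2\,dxdt$ for each $t\in[T/4,3T/4]$; averaging over $t\in[T/4,3T/4]$ and inserting the previous display yields \eqref{1.04}. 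Collecting the exponents, choosing $\lambda$ at its threshold $\lambda\sim C(e^{2\mu_*|\psi|_\infty}T + T^2 + 1 + T^2|B_1|_\infty + |a_1|_\infty^{2/3} + \dots)$, and simplifying gives an overall constant $e^{CK}$ with $K$ as in Theorem \ref{thm01.1}; the main bookkeeping obstacle is precisely this last step — verifying that every term generated (in particular the cross terms $|a_i|_\infty^{2/3}$ coming from optimizing $\lambda$ against $\mathbb{E}\int_Q\theta^2|a_1|_\infty^2 z^2$, and the $T|a_2|_\infty^2$ versus $|a_2|_\infty^{2/3}$ split) fits inside the stated form of $K$ rather than worsening it.
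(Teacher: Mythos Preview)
Your approach is exactly the paper's: apply \eqref{1.014} to \eqref{1.3} with $F_0=-a_1z-a_2Z$, $F=zB_1+ZB_2$, fix $\mu$, take $\lambda$ at the threshold $\lambda_1=C\big[T+T^2(1+|a_1|_\infty^{2/3}+|a_2|_\infty^{2/3}+|B_1|_\infty^2+|B_2|_\infty^2)\big]$ to absorb the $z^2$-contributions, restrict to $[T/4,3T/4]$, compare the extrema of $\theta^2\varphi^3$, and close with an It\^o/Gr\"onwall energy estimate on $|z(t)|_{L^2}^2$.

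Two small bookkeeping corrections so that the final exponent matches the stated $K$: (i) in the energy step, the cross term $C|B_2|_\infty\int|Z|\,|\nabla z|$ produces via Young's inequality $C|B_2|_\infty^2\int_Q Z^2$, which is \emph{not} a $z^2$-term and therefore stays outside the Gr\"onwall exponent (cf.\ the paper's \eqref{2.43012}--\eqref{2.45012}); this is exactly why $K$ carries $|B_2|_\infty^2$ without a $T$-factor, so your $e^{C(\ldots+T|B_2|_\infty^2+1)}$ is too coarse for large $T$; (ii) your threshold for $\lambda$ must also include $CT^2(|a_2|_\infty^{2/3}+|B_2|_\infty^2)$ (the paper imposes this in \eqref{4.20120}--\eqref{4.50120}), since otherwise the retained $Z^2$-coefficient $|a_2|_\infty^2$ would, after dividing by the minimum weight, contribute $|a_2|_\infty^2$ rather than $|a_2|_\infty^{2/3}$ to the exponent --- which is not dominated by $K$ for small $T$.
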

\begin{proof}
By applying \eqref{1.014} with $\mu=C$, $F_0=-a_1z-a_2Z\;\textnormal{and}\; F=zB_1+ZB_2$, we deduce that for $\lambda\geq C(T+T^2)$
\begin{align}
\begin{aligned}
&\,\lambda^3\mathbb{E}\int_Q \theta^2\varphi^3z^2dxdt\\
&\leq  C\lambda^3\mathbb{E}\int_{Q_0} \theta^2\varphi^3z^2dxdt+C\mathbb{E}\int_Q \theta^2 \vert a_1z+a_2Z\vert^2dxdt\\
\label{1.0142}&\quad +C\lambda^2\mathbb{E}\int_Q \theta^2\varphi^2\vert zB_1+ZB_2\vert^2dxdt+C\lambda^2\mathbb{E}\int_Q \theta^2\varphi^2Z^2dxdt.
\end{aligned}
\end{align}
Now, it is sufficient to take $\lambda\geq CT^2(\vert a_1\vert_\infty^{2/3}+\vert a_2\vert_\infty^{2/3})$ to get
\begin{equation}\label{4.20120}
    C\mathbb{E}\int_Q \theta^2 \vert a_1z+a_2Z\vert^2dxdt \leq \frac{1}{4}\lambda^3\mathbb{E}\int_Q \theta^2\varphi^3z^2dxdt +  C\lambda^{3}\mathbb{E}\int_Q \theta^2\varphi^3Z^2dxdt,
\end{equation}
and by $\lambda\geq CT^2(\vert B_1\vert_\infty^2+\vert B_2\vert_\infty^2)$, we also have
\begin{align}\label{4.50120}
\begin{aligned}
&\,C\lambda^2\mathbb{E}\int_Q \theta^2\varphi^2\vert zB_1+ZB_2\vert^2dxdt \\
    &\leq \frac{1}{4}\lambda^3\mathbb{E}\int_Q \theta^2\varphi^3z^2dxdt +  C\lambda^{3}\mathbb{E}\int_Q \theta^2\varphi^3Z^2dxdt.
\end{aligned}
\end{align}
Combining \eqref{1.0142}, \eqref{4.20120} and \eqref{4.50120}, we conclude that
\begin{equation*}
    \mathbb{E}\int_Q \theta^2 \varphi^3z^2 dxdt \leq C\mathbb{E}\int_{Q_0} \theta^2\varphi^3z^2dxdt +  C\mathbb{E}\int_Q \theta^2\varphi^3Z^2dxdt,
\end{equation*}
for $\lambda\geq \lambda_1=C\Big[T+T^2\big(1+\vert a_1\vert_\infty^{2/3}+\vert a_2\vert_\infty^{2/3}+\vert B_1\vert_\infty^2+\vert B_2\vert_\infty^2\big)\Big]$. Then, it follows that
\begin{equation}\label{2.3701}
\mathbb{E}\int_{T/4}^{3T/4}\int_G \theta^2\varphi^3z^2dxdt\leq  C\Bigg[\mathbb{E}\int_{Q_0} \theta^2\varphi^3z^2dxdt+\mathbb{E}\int_Q \theta^2\varphi^3Z^2dxdt\Bigg],
\end{equation}
for $\lambda\geq \lambda_1$. It is not difficult to see that the function
$$t\mapsto\min_{x\in\overline{G}}\,[\theta^2(t,x)\varphi^3(t,x)]$$
reaches its minimum in $[T/4,3T/4]$ at $t=T/4$ and also 
$$t\mapsto\max_{x\in\overline{G}}\,[\theta^2(t,x)\varphi^3(t,x)]$$
reaches its maximum in $(0,T)$ at $t=T/2$.

Now, fixing $\lambda=\lambda_1$, it is easy to check that
\begin{equation}\label{3.00116}
\max_{x\in\overline{G}}\theta^2\Big(\frac{T}{2},x\Big)\,\cdot\,\min_{x\in\overline{G}}\theta^{-2}\Big(\frac{T}{4},x\Big)=e^{C\lambda_1T^{-2}},\,
\end{equation}
and
\begin{equation}\label{3.00117}
\max_{x\in\overline{G}}\varphi^3\Big(\frac{T}{2},x\Big)\,\cdot\,\min_{x\in\overline{G}}\varphi^{-3}\Big(\frac{T}{4},x\Big)= C.
\end{equation}
Combining \eqref{2.3701}, \eqref{3.00116} and \eqref{3.00117}, we deduce that
 \begin{equation}\label{2.41012}
     \mathbb{E}\int_{T/4}^{3T/4}\int_G z^2 dxdt \leq e^{C r_1} \,\Bigg[ \mathbb{E}\int_{Q_0} z^2 dxdt+\mathbb{E}\int_{Q} Z^2 dxdt\Bigg],
 \end{equation}
 where $r_1=1+\frac{1}{T}+\vert a_1\vert_\infty^{2/3}+\vert a_2\vert_\infty^{2/3}+\vert B_1\vert_\infty^2+\vert B_2\vert_\infty^2$.
 
On the other hand, let $t\in[0,T]$, by Itô's formula in Lemma \ref{lm1.1}, computing $d_s(z,z)_{L^2(G)}$, integrating the result w.r.t $s\in[0,t]$ and taking the mean value on both sides, we obtain
\begin{align}
\begin{aligned}
&\,\mathbb{E}\int_G \vert z(0)\vert^2dx-\mathbb{E}\int_G \vert z(t)\vert^2dx\\
&\leq -2\beta\mathbb{E}\int_0^t\int_G \vert\nabla z\vert^2dxds+C\vert a_1\vert_\infty\mathbb{E}\int_0^t\int_G z^2dxds\\
&\quad\,+C\vert a_2\vert_\infty\mathbb{E}\int_0^t\int_G \vert z\vert\vert Z\vert dxds-\mathbb{E}\int_0^t\int_G Z^2 dxds\\
\label{2.42012}&\quad\,+C\vert B_1\vert_\infty\mathbb{E}\int_0^t\int_G \vert z\vert\vert\nabla z\vert dxds+C\vert B_2\vert_\infty\mathbb{E}\int_0^t\int_G \vert Z\vert\vert\nabla z\vert dxds.
\end{aligned}
\end{align}
By applying Young's inequality on the right-hand side of \eqref{2.42012}, one absorbs gradient terms by using the first term, and thus we arrive at
\begin{align}\label{2.43012}
\begin{aligned}
\mathbb{E}\int_G \vert z(0)\vert^2dx &\leq \mathbb{E}\int_G \vert z(t)\vert^2dx+Cr_2\mathbb{E}\int_0^t\int_G z^2dxds+C\vert B_2\vert_\infty^2\mathbb{E}\int_Q Z^2 dxdt,
\end{aligned}
\end{align}
with $r_2=\vert a_1\vert_\infty+\vert a_2\vert^2_\infty+\vert B_1\vert^2_\infty$. Hence, by Gronwall inequality, \eqref{2.43012} implies that
\begin{equation}\label{2.45012}
\mathbb{E}\int_G \vert z(0)\vert^2dx \leq e^{Cr_2T}\mathbb{E}\int_G \vert z(t)\vert^2dx+C\vert B_2\vert_\infty^2\mathbb{E}\int_Q Z^2 dxdt.
\end{equation}
Integrating \eqref{2.45012} on $(T/4,3T/4)$ and combining the obtained inequality with \eqref{2.41012}, we end up with 
\begin{equation}\label{4.130012}
    \mathbb{E}\int_G \vert z(0)\vert^2dx \leq e^{C\big(\frac{1}{T}+r_1+r_2T+\vert B_2\vert_\infty^2\big)}\Bigg[\mathbb{E}\int_{Q_0} z^2dxdt+\mathbb{E}\int_Q Z^2dxdt\Bigg].
\end{equation}
Finally, from estimate \eqref{4.130012}, it is straightforward to get the observability inequality \eqref{1.04}. This concludes the proof of Theorem \ref{thm4.1}.
\end{proof}

Now, we establish the observability inequality \eqref{1.8}.
\begin{thm}\label{thm4.2}
For all initial state $z_0\in L^2_{\mathcal{F}_0}(\Omega;L^2(G))$, the associated solution $z$ of \eqref{1.5} satisfies the observability inequality \eqref{1.8}.
\end{thm}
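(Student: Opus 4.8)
The plan is to mirror the proof of Theorem~\ref{thm4.1}, using the forward Carleman estimate \eqref{3.202002} in place of the backward one \eqref{1.014}. First I would apply Theorem~\ref{thm1.1} to the solution $z$ of \eqref{1.5} with $F_1=-a_1z$, $F=zB$ and $F_2=-a_2z$; this gives, for $\lambda\geq C(T+T^2)$,
\begin{align*}
\lambda^3\mathbb{E}\int_Q\theta^2\varphi^3z^2\,dxdt+\lambda\mathbb{E}\int_Q\theta^2\varphi|\nabla z|^2\,dxdt
&\leq C\lambda^3\mathbb{E}\int_{Q_0}\theta^2\varphi^3z^2\,dxdt+C|a_1|_\infty^2\mathbb{E}\int_Q\theta^2z^2\,dxdt\\
&\quad+C\lambda^2\big(|a_2|_\infty^2+|B|_\infty^2\big)\mathbb{E}\int_Q\theta^2\varphi^2z^2\,dxdt.
\end{align*}
Invoking the bounds $1\leq CT^2\varphi$ and $\varphi^2\leq CT^2\varphi^3$ (consequences of \eqref{2.301}), the three lower-order terms are absorbed by the leading term as soon as $\lambda\geq\lambda_1:=C\big[T+T^2\big(1+|a_1|_\infty^{2/3}+|a_2|_\infty^2+|B|_\infty^2\big)\big]$, and dividing by $\lambda^3$ leaves $\mathbb{E}\int_Q\theta^2\varphi^3z^2\,dxdt\leq C\,\mathbb{E}\int_{Q_0}\theta^2\varphi^3z^2\,dxdt$.

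Next I would drop the integrand on $(0,T/4)\cup(3T/4,T)$ on the left, fix $\lambda=\lambda_1$, and argue exactly as in the passage from \eqref{2.3701} to \eqref{2.41012}: since $t\mapsto\min_{x\in\overline{G}}[\theta^2\varphi^3]$ attains its minimum over $[T/4,3T/4]$ at $t=T/4$ while $t\mapsto\max_{x\in\overline{G}}[\theta^2\varphi^3]$ attains its maximum over $(0,T)$ at $t=T/2$, the elementary identities of the type \eqref{3.00116}--\eqref{3.00117} yield
$$\mathbb{E}\int_{T/4}^{3T/4}\!\int_G z^2\,dxdt\leq e^{Cr_1}\,\mathbb{E}\int_{Q_0}z^2\,dxdt,\qquad r_1=1+\frac{1}{T}+|a_1|_\infty^{2/3}+|a_2|_\infty^2+|B|_\infty^2.$$

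The last ingredient is an energy estimate for \eqref{1.5}. Applying the Itô formula of Lemma~\ref{lm1.1} to $d_s(z,z)_{L^2(G)}$, integrating over $s\in(t,T)$ and taking expectations, the coercivity of $A$ furnishes the dissipative term $-2\beta\,\mathbb{E}\int_t^T\!\int_G|\nabla z|^2$, which after Young's inequality absorbs the advection contribution $-\int_G zB\cdot\nabla z\,dx$ (a weak divergence term, read through Lemma~\ref{lm1.2}), leaving
$$\mathbb{E}|z(T)|^2_{L^2(G)}\leq\mathbb{E}|z(t)|^2_{L^2(G)}+Cr_3\,\mathbb{E}\int_t^T\!\int_G z^2\,dxds,\qquad r_3=|a_1|_\infty+|a_2|_\infty^2+|B|_\infty^2.$$
Gronwall's inequality then gives $\mathbb{E}|z(T)|^2_{L^2(G)}\leq e^{Cr_3T}\,\mathbb{E}|z(t)|^2_{L^2(G)}$ for every $t\in[0,T]$; integrating in $t$ over $(T/4,3T/4)$, combining with the previous display, and using $2/T\leq e^{C/T}$, I obtain $\mathbb{E}|z(T)|^2_{L^2(G)}\leq e^{C(r_1+r_3T+1/T)}\,\mathbb{E}\int_{Q_0}z^2\,dxdt$, which is exactly \eqref{1.8} since $r_1+r_3T+\frac{1}{T}\leq CM$. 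The point demanding the most care is tracking the powers of $T$ carried by $\theta$ and $\varphi$ so that the exponent collapses precisely onto $M=1+\frac{1}{T}+|a_1|_\infty^{2/3}+T|a_1|_\infty+(1+T)|a_2|_\infty^2+(1+T)|B|_\infty^2$; the only other delicate step is justifying the energy identity for the weak-divergence drift, which is precisely what Lemmas~\ref{lm1.1} and \ref{lm1.2} make rigorous.
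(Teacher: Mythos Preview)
Your proposal is correct and follows essentially the same route as the paper: apply the forward Carleman estimate \eqref{3.202002} with $F_1=-a_1z$, $F=zB$, $F_2=-a_2z$, absorb the lower-order terms by taking $\lambda\geq C\big[T+T^2(1+|a_1|_\infty^{2/3}+|a_2|_\infty^2+|B|_\infty^2)\big]$, pass to the unweighted estimate on $(T/4,3T/4)$ via \eqref{3.00116}--\eqref{3.00117}, and combine with the energy inequality obtained from It\^o's formula and Gronwall. The constants and the identification of $M$ are handled exactly as in the paper.
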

\begin{proof}
Using the Carleman estimate \eqref{3.202002}, and choosing $\mu=\mu_0$, $F_1=-a_1z$, $F_2=-a_2z$ and $F=zB$, then we deduce that for all $\lambda\geq C(T+T^2)$
\begin{align}
\begin{aligned}
\lambda^3\mathbb{E}\int_Q \theta^2\varphi^3z^2dxdt 
\leq& C\Bigg[\lambda^3\mathbb{E}\int_{Q_0} \theta^2\varphi^3z^2dxdt+\mathbb{E}\int_Q \theta^2\vert a_1z\vert^2dxdt\\
\label{3.13}& \quad\,+\lambda^2\mathbb{E}\int_Q \theta^2\varphi^2\vert a_2z\vert^2dxdt+\lambda^2\mathbb{E}\int_Q \theta^2\varphi^2\vert zB\vert^2dxdt\Bigg].
\end{aligned}
\end{align}
By choosing a large enough $\lambda$ in \eqref{3.13}, we use the left-hand side to absorb the second, third, and fourth terms on the right-hand side. Here, it is sufficient to take $\lambda\geq CT^2\big(\vert a_1\vert^{2/3}_\infty+\vert a_2\vert^{2}_\infty+\vert B\vert^{2}_\infty\big)$ to obtain
\begin{align}
\begin{aligned}
&\,C\mathbb{E}\int_Q \theta^2\vert a_1z\vert^2dxdt+C\lambda^2\mathbb{E}\int_Q \theta^2\varphi^2\vert a_2z\vert^2dxdt
\\
\label{3.14}&+C\lambda^2\mathbb{E}\int_Q \theta^2\varphi^2\vert zB\vert^2dxdt\leq\frac{1}{2}\lambda^3\mathbb{E}\int_Q \theta^2\varphi^3z^2dxdt.
\end{aligned}
\end{align}
Combining \eqref{3.13} and \eqref{3.14}, we get for $\lambda\geq\overline{\lambda}_1= C\big[T+T^2\big(1+\vert a_1\vert^{2/3}_\infty+\vert a_2\vert^{2}_\infty+\vert B\vert^{2}_\infty\big)\big]$
\begin{equation}\label{3.15}
\mathbb{E}\int_Q \theta^2\varphi^3z^2dxdt 
 \leq C\,\mathbb{E}\int_{Q_0} \theta^2\varphi^3z^2dxdt.
 \end{equation}
Now, set $\lambda=\overline{\lambda}_1$, and combining \eqref{3.15}, \eqref{3.00116} and \eqref{3.00117}, we conclude that
 \begin{equation}\label{3.18}
\mathbb{E}\int_{T/4}^{3T/4}\int_G z^2(t,x) dxdt \leq e^{C\big(1+\frac{1}{T}+\vert a_1\vert_\infty^{2/3}+\vert a_2\vert_\infty^{2}+\vert B\vert_\infty^2\big)} \, \mathbb{E}\int_{Q_0} z^2(t,x) dxdt.
 \end{equation}
 
Let $t\in(0,T)$, computing $d_s(z,z)_{L^2(G)}$ by using Itô's formula and integrating the result w.r.t $s\in(t,T)$, we obtain
\begin{align}
\begin{aligned}
&\,\mathbb{E}\int_G  z^2(T,x) dx-\mathbb{E}\int_G  z^2(t,x) dx\\
&= -2\mathbb{E}\int_t^T\int_G A\nabla z\cdot\nabla z dxds-2\mathbb{E}\int_t^T\int_G a_1z^2dxds\\
\label{4.23012}&\,\quad-2\mathbb{E}\int_t^T\int_G zB\cdot\nabla z dxds+\mathbb{E}\int_t^T\int_G  a_2^2 z^2dxds.
\end{aligned}
\end{align}
By the assumptions on $A$ and Young's inequality, \eqref{4.23012} implies that
\begin{align}
\begin{aligned}
&\,\mathbb{E}\int_G  z^2(T,x) dx-\mathbb{E}\int_G  z^2(t,x) dx\\
&\leq-2\beta\mathbb{E}\int_t^T\int_G \vert\nabla z\vert^2 dxds+2\vert a_1\vert_\infty\mathbb{E}\int_t^T\int_G z^2dxds\\
&\,\quad+\rho\mathbb{E}\int_t^T\int_G \vert\nabla z\vert^2 dxds+C(\rho)\vert B\vert_\infty^2\mathbb{E}\int_t^T\int_G z^2 dxds\\
\label{3.19}&\,\quad+\vert a_2\vert^2_\infty\mathbb{E}\int_t^T\int_G z^2dxds.
\end{aligned}
\end{align}
By choosing a small enough $\rho$ in \eqref{3.19}, it is easy to see
\begin{equation}\label{4.24032}
\mathbb{E}\int_G  z^2(T,x) dx\leq\mathbb{E}\int_G  z^2(t,x) dx+C(\vert a_1\vert_\infty+\vert B\vert_\infty^2+\vert a_2\vert^2_\infty)\mathbb{E}\int_t^T\int_G z^2dxds.
\end{equation}
Hence, by Gronwall's inequality, \eqref{4.24032} provides that 
\begin{equation}\label{3.20}
\mathbb{E}\int_G  z^2(T,x) dx\leq e^{CT(\vert a_1\vert_\infty+\vert B\vert_\infty^2+\vert a_2\vert^2_\infty)}\mathbb{E}\int_G  z^2(t,x) dx.
\end{equation}
Finally, by combining \eqref{3.18} and \eqref{3.20}, we conclude the proof of Theorem \ref{thm4.2}.
\end{proof}

\section{Proof of Theorems \ref{thm01.1} and \ref{thm1.2}}\label{sec5}
Only Theorem \ref{thm01.1} is proved here, the proof of Theorem \ref{thm1.2} can be provided similarly.
\begin{proof}[Proof of Theorem \ref{thm01.1}]
Let us fix $\varepsilon>0$, $y_0\in L^2_{\mathcal{F}_0}(\Omega;L^2(G))$, and then consider the following optimal control problem
\begin{align}\label{05.1}
\textnormal{inf}\big\{J_\varepsilon(u,v)\,\vert\, (u,v)\in L^2_\mathcal{F}(0,T;L^2(G))\times L^2_\mathcal{F}(0,T;L^2(G))\big\},
\end{align}
with
$$J_\varepsilon(u,v)=\frac{1}{2}\mathbb{E}\int_Q u^2 dxdt+\frac{1}{2}\mathbb{E}\int_Q v^2 dxdt+\frac{1}{2\varepsilon}\mathbb{E}\int_G \vert y(T,\cdot)\vert^2dx,$$
where $y$ is the solution of \eqref{1.1} with initial state $y_0$ and controls $u$ and $v$. It is easy to check that $J_\varepsilon$ is well defined, continuous, strictly convex, and coercive. Then, the problem \eqref{05.1} has a unique solution $(u_\varepsilon,v_\varepsilon)\in L^2_\mathcal{F}(0,T;L^2(G))\times L^2_\mathcal{F}(0,T;L^2(G))$.

By the standard duality arguments, the minimum $(u_\varepsilon,v_\varepsilon)$ is characterized by 
\begin{align}\label{05.2}
    u_\varepsilon=\mathbbm{1}_{G_0}z_\varepsilon\,,\qquad v_\varepsilon=Z_\varepsilon,
\end{align}
with $(z_\varepsilon,Z_\varepsilon)$ is the solution of the following backward stochastic parabolic equation
\begin{equation*}
\begin{cases}
\begin{array}{ll}
dz_\varepsilon + \nabla\cdot(A\nabla z_\varepsilon) \,dt = (-a_1 z_\varepsilon-a_2Z_\varepsilon+\nabla\cdot(z_\varepsilon B_1+Z_\varepsilon B_2)) \,dt + Z_\varepsilon \,dW(t) &\textnormal{in}\,\,Q,\\
z_\varepsilon=0 &\textnormal{on}\,\,\Sigma,\\
z_\varepsilon(T)=-\frac{1}{\varepsilon}y_\varepsilon(T,\cdot) &\textnormal{in}\,\,G,
			\end{array}
		\end{cases}
\end{equation*}
where $y_\varepsilon$ is the solution of \eqref{1.1} with controls $u_\varepsilon$ and $v_\varepsilon$.

By Itô's formula, we have that
\begin{align}\label{05.3}
d(y_\varepsilon z_\varepsilon)=\mathbbm{1}_{G_0}u_\varepsilon z_\varepsilon+v_\varepsilon Z_\varepsilon.
\end{align}
Integrating \eqref{05.3} on $Q$ and taking the expectation on both sides, we get
\begin{align}\label{05.4}
\mathbb{E}\int_{Q_0}z_\varepsilon^2dxdt+\mathbb{E}\int_{Q}Z_\varepsilon^2dxdt+\frac{1}{\varepsilon}\mathbb{E}\int_G \vert y_\varepsilon(T,\cdot)\vert^2dx=\mathbb{E}\int_G y_0 z_\varepsilon(0,\cdot)dx
\end{align}
By Young's inequality, \eqref{05.4} implies that
\begin{align}\label{05.5}
\begin{aligned}
&\,\mathbb{E}\int_{Q_0}z_\varepsilon^2dxdt+\mathbb{E}\int_{Q}Z_\varepsilon^2dxdt+\frac{1}{\varepsilon}\mathbb{E}\int_G \vert y_\varepsilon(T,\cdot)\vert^2dx\\
&\leq \frac{e^{CK}}{2}\vert y_0\vert^2_{L^2_{\mathcal{F}_0}(\Omega;L^2(G))}+\frac{1}{2e^{CK}}\vert z_\varepsilon(0,\cdot)\vert^2_{L^2_{\mathcal{F}_0}(\Omega;L^2(G))},
\end{aligned}
\end{align}
where $e^{CK}$ is the same constant appearing in \eqref{1.04}.

Now, combining \eqref{05.5}, \eqref{1.04} and \eqref{05.2}, it is easy to see that
\begin{align}\label{05.6}
\vert u_\varepsilon\vert^2_{L^2_\mathcal{F}(0,T;L^2(G))}+\vert v_\varepsilon\vert^2_{L^2_\mathcal{F}(0,T;L^2(G))}+\frac{2}{\varepsilon}\mathbb{E}\int_G \vert y_\varepsilon(T,\cdot)\vert^2dx\leq e^{CK}\vert y_0\vert^2_{L^2_{\mathcal{F}_0}(\Omega;L^2(G))}.
\end{align}
It follows from \eqref{05.6} that there exist sub-sequences of $(u_\varepsilon)$ and $(v_\varepsilon)$ which still denoted also respectively by $(u_\varepsilon)$ and $(v_\varepsilon)$ such that as $\varepsilon\rightarrow0$, we have
\begin{align}\label{05.7}
\begin{aligned}
&\,u_\varepsilon \longrightarrow \hat{u},\,\,\,\textnormal{weakly in}\,\,\,L^2((0,T)\times\Omega;L^2(G));
\\
&v_\varepsilon \longrightarrow \hat{v},\,\,\,\textnormal{weakly in}\,\,\,L^2((0,T)\times\Omega;L^2(G)).
\end{aligned}
\end{align}
Noting also that $\textnormal{Supp}\, \hat{u}\subset [0,T]\times \overline{G_0}$. Then, finally, if $\hat{y}$ denotes the solution of \eqref{1.1} associated to $y_0$, $\hat{u}$ and $\hat{v}$, then according to \eqref{05.7} and \eqref{05.6}, we have that
$$\vert \hat{u}\vert^2_{L^2_\mathcal{F}(0,T;L^2(G))}+\vert \hat{v}\vert^2_{ L^2_\mathcal{F}(0,T;L^2(G))}\leq e^{CK}\,\vert y_0\vert^2_{L^2_{\mathcal{F}_0}(\Omega;L^2(G))},$$
and $\hat{y}(T,\cdot)=0$ in $G$, $\mathbb{P}\textnormal{-a.s.}$ This completes the proof of Theorem \ref{thm01.1}.
\end{proof}
\section*{Appendix. Proof of Lemma \ref{lm2.2}}
Here, we follow the same strategy of computation as in the proof of \cite[Theorem 9.39]{lu2021mathematical}. In our case, we need to treat carefully the dependence of Carleman's parameters w.r.t the final control time $T$. First, we recall that the diffusion matrix operator $A=(a^{jk})_{j,k=1,2,...,N}$ satisfies the following assumptions
 \begin{enumerate}
\item $a^{jk}\in L^\infty_\mathcal{F}(\Omega;C^1([0,T];W^{2,\infty}(G)))$, and $a^{jk}=a^{kj}$\,\, for all $j,k=1,2,...,N$.
\item There exists a constant $\beta>0$ such that
    $$\sum_{j,k=1}^N a^{jk}\xi_j\xi_k \geq \beta\vert\xi\vert^2,\qquad (\omega,t,x,\xi)\equiv(\omega,t,x,\xi_1,...,\xi_N)\in\Omega\times Q\times\mathbb{R}^N,$$
\end{enumerate}
and set the two functions $l\in C^{1,3}(Q)$ and $\Psi\in C^{1,2}(Q)$. For simplicity of notations, we denote by $y_{x_j}=\frac{\partial y}{\partial x_j}$ the partial derivative of a function $y$ with respect to $x_j$, where $x_j$ is the $j$-component of a generic point $x=(x_1,...,x_N)\in\mathbb{R}^N$. Similarly, we also denote by $y_{x_jx_k}$ the second partial derivative of $y$ w.r.t variables $x_j$ and $x_k$, and so on. Put
\begin{equation}\label{5.1111012}
    \begin{cases}
        \mathcal{A}= \displaystyle\sum_{j,k=1}^N (a^{jk}l_{x_j}l_{x_k}-a^{jk}_{x_k}l_{x_j}-a^{jk}l_{x_jx_k})-\Psi-l_t,\\
        \mathcal{B}= 2\Bigg[\mathcal{A}\Psi+\displaystyle\sum_{j,k=1}^N (\mathcal{A}a^{jk}l_{x_j})_{x_k}\Bigg]-\mathcal{A}_t+\sum_{j,k=1}^N (a^{jk}\Psi_{x_k})_{x_j},\\
        c^{jk}=\displaystyle\sum_{j',k'=1}^N\Big[2a^{jk'}(a^{j'k}l_{x_{j'}})_{x_{k'}}-(a^{jk}a^{j'k'}l_{x_{j'}})_{x_{k'}}\Big]+\frac{a^{jk}_t}{2}-\Psi a^{jk}.
    \end{cases}
\end{equation}

From \cite[Theorem 9.26]{lu2021mathematical}, we have the following fundamental weighted identity for the stochastic parabolic-like operator \textit{``$dz+\nabla\cdot(A\nabla z)dt$"}.
\begin{thm}\label{thm5.1}
Let $z$ be an $H^2(G)$-valued Itô process. Set $\theta=e^l$ and $h=\theta z$. Then, for any $t\in[0,T]$ and a.e. $(x,\omega)\in G\times\Omega$,
\begin{align}
\begin{aligned}
&\,2\theta[\nabla\cdot(A\nabla h)+\mathcal{A}h][dz+\nabla\cdot(A\nabla z)dt]-2\nabla\cdot(A\nabla h \,dh)\\
&+2\sum_{j,k=1}^N\Bigg[\sum_{j',k'=1}^N \big(2a^{jk}a^{j'k'}l_{x_{j'}}h_{x_{j}}h_{x_{k'}}-a^{jk}a^{j'k'}l_{x_j}h_{x_{j'}}h_{x_{k'}}\big)\\
&\qquad\qquad\qquad\quad\;\;\;-\Psi a^{jk}h_{x_j}h+a^{jk}\Big(\mathcal{A}l_{x_j}+\frac{\Psi_{x_j}}{2}\Big)h^2\Bigg]_{x_k}dt\\
&=2\sum_{j,k=1}^N c^{jk}h_{x_j}h_{x_k}dt+\mathcal{B}h^2dt+d\Bigg(-\sum_{j,k=1}^N a^{jk}h_{x_j}h_{x_k}+\mathcal{A}h^2\Bigg)\\
\label{5.121301}& \;\;\;\,+2\big[\nabla\cdot(A\nabla h)+\mathcal{A}h\big]^2dt+\theta^2\sum_{j,k=1}^N a^{jk}(dz_{xj}+l_{x_j}dz)(dz_{x_k}+l_{x_k}dz)\\
& \;\;\;\,-\theta^2\mathcal{A}(dz)^2.
\end{aligned}
\end{align}
\end{thm}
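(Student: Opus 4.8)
The plan is to prove \eqref{5.121301} as a purely pointwise identity in $(t,x,\omega)$, by direct computation in the spirit of the ``pointwise weighted identity'' method: one never integrates over $G$ or $\Omega$, so there are no boundary terms, and the only tools are the spatial product rule $\phi\psi_{x_k}=(\phi\psi)_{x_k}-\phi_{x_k}\psi$ and It\^o's formula in $t$.

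First I would set $h=\theta z$ with $\theta=e^{l}$ and rewrite the operator acting on $z$ in terms of $h$. Since $l$ is deterministic, $\theta$ contributes only bounded-variation terms, so $\theta\,dz=dh-h\,l_t\,dt$, while $z_{x_j}=\theta^{-1}(h_{x_j}-l_{x_j}h)$. Expanding $\theta\,\nabla\cdot(A\nabla z)$ and using the symmetry $a^{jk}=a^{kj}$, a short computation yields the conjugation formula
\[
\theta\big[dz+\nabla\cdot(A\nabla z)\,dt\big]=dh+\big[\nabla\cdot(A\nabla h)+\mathcal{A}h\big]\,dt+\big[\Psi h-2\nabla h\cdot A\nabla l\big]\,dt,
\]
where the definition of $\mathcal{A}$ in \eqref{5.1111012} is precisely what absorbs $l_t$, $\nabla\cdot(A\nabla l)$, and $\nabla l\cdot A\nabla l$. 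Multiplying this by $2\big[\nabla\cdot(A\nabla h)+\mathcal{A}h\big]$ rewrites the leading term $2\theta[\nabla\cdot(A\nabla h)+\mathcal{A}h][dz+\nabla\cdot(A\nabla z)\,dt]$ on the left of \eqref{5.121301} as the sum of three pieces: the ``square'' $2[\nabla\cdot(A\nabla h)+\mathcal{A}h]^2\,dt$, which already sits on the right side; a ``martingale'' piece $2[\nabla\cdot(A\nabla h)+\mathcal{A}h]\,dh$; and a ``first-order'' piece $2[\nabla\cdot(A\nabla h)+\mathcal{A}h]\big[\Psi h-2\nabla h\cdot A\nabla l\big]\,dt$.

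For the martingale piece I would apply It\^o's formula to $\mathcal{A}h^2$ and to $\sum_{j,k}a^{jk}h_{x_j}h_{x_k}$, together with one spatial integration by parts in $2\sum_{j,k}(a^{jk}h_{x_j})_{x_k}\,dh$. This produces, in order: the exact differential $d\big(-\sum a^{jk}h_{x_j}h_{x_k}+\mathcal{A}h^2\big)$; the $x$-divergence term $2\nabla\cdot(A\nabla h\,dh)$ that is subtracted on line~1 of \eqref{5.121301}; the drift corrections $\sum a^{jk}_t h_{x_j}h_{x_k}\,dt$ and $-\mathcal{A}_t h^2\,dt$, which are exactly the $\frac{a^{jk}_t}{2}$ inside $c^{jk}$ and the $-\mathcal{A}_t$ inside $\mathcal{B}$; and the quadratic-variation terms $\sum a^{jk}\,dh_{x_j}\,dh_{x_k}-\mathcal{A}\,(dh)^2$. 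Because $h_{x_j}=\theta(z_{x_j}+l_{x_j}z)$ and $\theta$ is free of martingale part, the martingale part of $dh_{x_j}$ is $\theta(dz_{x_j}+l_{x_j}dz)$ and $(dh)^2=\theta^2(dz)^2$, so these last two terms equal $\theta^2\sum a^{jk}(dz_{x_j}+l_{x_j}dz)(dz_{x_k}+l_{x_k}dz)-\theta^2\mathcal{A}(dz)^2$, matching the last two terms of \eqref{5.121301}.

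It then remains to treat the first-order piece, which is the deterministic heart of the computation and, I expect, the main obstacle: after repeatedly applying the spatial product rule to move derivatives off $h$, one must collect all terms of the form $(\text{coefficient})\,h_{x_j}h_{x_k}$ into $2\sum c^{jk}h_{x_j}h_{x_k}\,dt$ with the spatial part of $c^{jk}$ as in \eqref{5.1111012}, all terms of the form $(\text{coefficient})\,h^2$ into the spatial part of $\mathcal{B}h^2\,dt$, and everything else into the big $x$-divergence bracket on lines~2--4 of \eqref{5.121301}, with no residue. The bookkeeping is heavy --- the expansion generates many products of $l$, $\nabla l$, $\nabla^2 l$, $\Psi$, $\nabla\Psi$, $A$, and its first and second spatial derivatives --- and the regularity hypotheses ($A\in C^1_tW^{2,\infty}_x$, $l\in C^{1,3}$, $\Psi\in C^{1,2}$) are exactly what guarantee every term in $\mathcal{A},\mathcal{B},c^{jk}$ is well defined. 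Once this regrouping is completed, adding back the square piece and the pieces identified above yields \eqref{5.121301} term by term.
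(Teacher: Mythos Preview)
The paper does not actually prove this identity: it simply quotes it as \cite[Theorem~9.26]{lu2021mathematical} and uses it as a black box. Your outline, by contrast, sketches an honest proof, and the strategy you describe --- conjugate $z\mapsto h=\theta z$, split $2[\nabla\cdot(A\nabla h)+\mathcal{A}h]$ times the conjugated operator into the square, the $dh$-piece, and the first-order piece, then use It\^o's formula on $\mathcal{A}h^2$ and $\sum a^{jk}h_{x_j}h_{x_k}$ together with repeated spatial product rules --- is exactly the standard route taken in the reference. Your conjugation formula is correct (the definition of $\mathcal{A}$ in \eqref{5.1111012} is tailored so that the zero-order terms collapse as you say), and your identification of the quadratic-variation terms via $dh_{x_j}=\theta(dz_{x_j}+l_{x_j}dz)+(\text{finite variation})$ is the right mechanism for producing the $\theta^2\sum a^{jk}(dz_{x_j}+l_{x_j}dz)(dz_{x_k}+l_{x_k}dz)-\theta^2\mathcal{A}(dz)^2$ terms. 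The only caveat is that the ``bookkeeping'' step you flag --- regrouping the first-order piece into $2\sum c^{jk}h_{x_j}h_{x_k}$, $\mathcal{B}h^2$, and the exact $x$-divergence --- is where all the content lies, and in a fully written proof you would need to display at least the key intermediate identities (e.g., how $-4[\nabla\cdot(A\nabla h)](\nabla h\cdot A\nabla l)$ unpacks into the double-sum divergence plus the spatial part of $c^{jk}$) rather than assert that the regrouping closes.
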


In what follows, we choose $\Psi=-2\sum_{j,k=1}^N a^{jk}l_{x_jx_k}$ and $l$ is the function defined in \eqref{2.2012}. For a positive integer $n$, we denote by $O(\mu^n)$ a function of order $\mu^n$ for a large $\mu$, and likewise, we use the notation $O(e^{\mu\vert\psi\vert_\infty})$. It is easy to see that for any parameters $\lambda,\mu\geq1$, we have for any $j,k=1,2,...,N$
\begin{equation}\label{5.1201}
 l_t=\lambda\alpha_t,\quad l_{x_j}=\lambda\mu\varphi\psi_{x_j},\quad l_{x_jx_k}=\lambda\mu^2\varphi\psi_{x_j}\psi_{x_k}+\lambda\mu\varphi\psi_{x_jx_k},
 \end{equation}
and also
\begin{align}\label{5.1301}
\begin{aligned}
&\alpha_t=T\varphi^2 O(e^{2\mu\vert\psi\vert_\infty}),\quad \varphi_t=T\varphi^2O(1),\\
&\alpha_{tt}=T^2\varphi^3 O(e^{2\mu\vert\psi\vert_\infty}),\quad \varphi_{tt}=T^2\varphi^3O(1).
\end{aligned}
\end{align}

We have the following estimates on $\mathcal{A}$, $\mathcal{B}$ and $c^{jk}$ defined in \eqref{5.1111012}.
\begin{thm}\label{thm5.2}
    There exists $C=C(G,A)>0$ such that for large enough $\mu\geq C$, and $\lambda\geq C(T+T^2)$, we have
$$\mathcal{A}=\lambda^2\mu^2\varphi^2\sum_{j,k=1}^N a^{jk}\psi_{x_j}\psi_{x_k}+\lambda\varphi O(\mu^2)+\lambda T\varphi^2 O(e^{2\mu\vert\psi\vert_\infty}).$$
\begin{align*}
\mathcal{B}\geq&\, 2\beta^2\lambda^3\mu^4\varphi^3\vert\nabla\psi\vert^4+\lambda^3\varphi^3O(\mu^3)+\lambda^2\varphi^2O(\mu^4)+\lambda\varphi O(\mu^4)+\lambda^2T\varphi^3O(\mu^2e^{2\mu\vert\psi\vert_\infty}).
\end{align*}
$$\sum_{j,k=1}^N c^{jk}\xi_{j}\xi_{k}\geq [\beta^2\lambda\mu^2\varphi\vert\nabla\psi\vert^2+\lambda\varphi O(\mu)]\vert\xi\vert^2,\qquad\quad\forall\xi=(\xi_1,...,\xi_N)\in\mathbb{R}^N,$$
for all $x\in G$ with $\vert\nabla\psi(x)\vert>0$, and any $t\in [0,T]$, $\mathbb{P}\textnormal{-a.s.}$
\end{thm}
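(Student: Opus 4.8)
The plan is to obtain all three estimates by a single direct computation: substitute into the algebraic definitions \eqref{5.1111012} the explicit weight $l=\lambda\alpha$ of \eqref{2.2012} together with the choice $\Psi=-2\sum_{j,k}a^{jk}l_{x_jx_k}$, expand by the product rule, and sort the resulting terms by their orders in the large parameters $\lambda$ and $\mu$. The bookkeeping is governed by three rules: each spatial derivative of $\varphi$ produces one power of $\mu$ (since $\varphi_{x_j}=\mu\psi_{x_j}\varphi$ and $\psi\in C^4(\overline G)$); each spatial derivative of $l$ costs one power of $\lambda$ and at least one of $\mu$, as recorded in \eqref{5.1201}; and each time derivative costs one power of $T$, one of $\varphi$, and a factor $O(e^{2\mu|\psi|_\infty})$, as recorded in \eqref{2.301} and \eqref{5.1301}. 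Since $a^{jk}$ and its derivatives up to second order are bounded by $C=C(G,A)$, differentiating a coefficient only lowers the $\mu$-order. The hypotheses $\mu\geq C$ and $\lambda\geq C(T+T^2)$ are used only to fold lower powers of $\mu$ and the $T^2$-weighted remainders (coming from $l_{tt}=\lambda\alpha_{tt}=\lambda T^2\varphi^3O(e^{2\mu|\psi|_\infty})$, from $\varphi_{tt}$, etc.) into the $O(\cdot)$-buckets appearing in the statement. The whole computation runs parallel to that of \cite[Theorem 9.39]{lu2021mathematical}; the only new point is keeping explicit track of the powers of $T$.

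First I treat $\mathcal{A}$. With $\Psi=-2\sum_{j,k}a^{jk}l_{x_jx_k}$, the term $-\sum_{j,k}a^{jk}l_{x_jx_k}$ in \eqref{5.1111012} combines with $-\Psi=+2\sum_{j,k}a^{jk}l_{x_jx_k}$, so $\mathcal{A}=\sum_{j,k}a^{jk}l_{x_j}l_{x_k}-\sum_{j,k}a^{jk}_{x_k}l_{x_j}+\sum_{j,k}a^{jk}l_{x_jx_k}-l_t$. Using \eqref{5.1201}, the first sum equals $\lambda^2\mu^2\varphi^2\sum_{j,k}a^{jk}\psi_{x_j}\psi_{x_k}$ and is the top-order term; the third sum is $\lambda\varphi\,O(\mu^2)$, the second sum is $\lambda\varphi\,O(\mu)\subset\lambda\varphi\,O(\mu^2)$, and $l_t=\lambda\alpha_t=\lambda T\varphi^2O(e^{2\mu|\psi|_\infty})$ by \eqref{5.1301}. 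This gives the first assertion.

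The lower bound on $\mathcal{B}$ is the main step. Write $\mathcal{A}=\mathcal{A}_0+(\mathrm{l.o.t.})$ and $\Psi=\Psi_0+(\mathrm{l.o.t.})$ with $\mathcal{A}_0=\lambda^2\mu^2\varphi^2\Phi$, $\Psi_0=-2\lambda\mu^2\varphi\Phi$, where $\Phi:=\sum_{j,k}a^{jk}\psi_{x_j}\psi_{x_k}$. The only contributions of order $\lambda^3\mu^4$ come from $2\mathcal{A}\Psi$ and from $2\sum_{j,k}(\mathcal{A}a^{jk}l_{x_j})_{x_k}$. The first gives $2\mathcal{A}_0\Psi_0=-4\lambda^3\mu^4\varphi^3\Phi^2$. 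For the second, $\mathcal{A}_0a^{jk}l_{x_j}=\lambda^3\mu^3\varphi^3\Phi\,a^{jk}\psi_{x_j}$, and letting $\partial_{x_k}$ fall on the three $\varphi$-factors produces $3\varphi^2\varphi_{x_k}=3\mu\psi_{x_k}\varphi^3$; summing over $j,k$ and using $\sum_{j,k}a^{jk}\psi_{x_j}\psi_{x_k}=\Phi$ yields $2\cdot 3\lambda^3\mu^4\varphi^3\Phi^2=6\lambda^3\mu^4\varphi^3\Phi^2$. Hence the $\lambda^3\mu^4$-part of $\mathcal{B}$ is $(-4+6)\lambda^3\mu^4\varphi^3\Phi^2=2\lambda^3\mu^4\varphi^3\Phi^2$, and by ellipticity $\Phi\geq\beta|\nabla\psi|^2\geq 0$, so $\Phi^2\geq\beta^2|\nabla\psi|^4$. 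All remaining terms are of strictly lower order: derivatives falling on $\Phi$, $a^{jk}$ or $\psi_{x_j}$ give $\lambda^3\varphi^3O(\mu^3)$; products of a lower-order part of $\mathcal{A}$ or $\Psi$ with a leading part give $\lambda^2\varphi^2O(\mu^4)$ or, when the $l_t$-part of $\mathcal{A}$ is involved, $\lambda^2T\varphi^3O(\mu^2e^{2\mu|\psi|_\infty})$; the full term $\sum_{j,k}(a^{jk}\Psi_{x_j})_{x_k}$ is $\lambda\varphi\,O(\mu^4)$; and $-\mathcal{A}_t$ is $\lambda^2T\varphi^3O(\mu^2e^{2\mu|\psi|_\infty})$ after using $\lambda\geq CT$ to absorb the $\lambda\alpha_{tt}$-contribution. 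Collecting gives the stated lower bound on $\mathcal{B}$.

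Finally, for $c^{jk}$, \eqref{5.1201} gives $(a^{j'k}l_{x_{j'}})_{x_{k'}}=\lambda\mu^2\varphi\,a^{j'k}\psi_{x_{j'}}\psi_{x_{k'}}+\lambda\varphi\,O(\mu)$ and $-\Psi a^{jk}=2\lambda\mu^2\varphi\,\Phi\,a^{jk}+\lambda\varphi\,O(\mu)$, while $\tfrac12 a^{jk}_t=O(1)\subset\lambda\varphi\,O(\mu)$. Contracting with $\xi_j\xi_k$, summing over $j,k$, and using $\sum_{j,k'}a^{jk'}\psi_{x_{k'}}\xi_j=(A\nabla\psi)\cdot\xi$, the first bracket of \eqref{5.1111012} contributes $2\lambda\mu^2\varphi\big((A\nabla\psi)\cdot\xi\big)^2-\lambda\mu^2\varphi\,\Phi\,(A\xi,\xi)_{\mathbb{R}^N}$ and $-\Psi a^{jk}$ contributes $2\lambda\mu^2\varphi\,\Phi\,(A\xi,\xi)_{\mathbb{R}^N}$, so the $\mu^2$-part is $2\lambda\mu^2\varphi\big((A\nabla\psi)\cdot\xi\big)^2+\lambda\mu^2\varphi\,\Phi\,(A\xi,\xi)_{\mathbb{R}^N}\geq\lambda\mu^2\varphi\,\Phi\,(A\xi,\xi)_{\mathbb{R}^N}\geq\beta^2\lambda\mu^2\varphi|\nabla\psi|^2|\xi|^2$ at points where $|\nabla\psi|>0$, with remainder $\lambda\varphi\,O(\mu)|\xi|^2$; this is the third assertion. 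I expect the bookkeeping in the $\mathcal{B}$-estimate to be the only genuine obstacle — in particular checking that the two $\lambda^3\mu^4$-contributions combine with coefficient exactly $+2$ (rather than $0$ or a negative number, which would destroy the Carleman argument downstream) and that every $T$-weighted remainder really does fit under $\lambda^2T\varphi^3O(\mu^2e^{2\mu|\psi|_\infty})$ once $\lambda\geq C(T+T^2)$; the estimates for $\mathcal{A}$ and $c^{jk}$ are short once the substitution rules are in place.
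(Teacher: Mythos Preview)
Your proposal is correct and follows essentially the same route as the paper: substitute \eqref{5.1201}--\eqref{5.1301} into \eqref{5.1111012} with $\Psi=-2\sum_{j,k}a^{jk}l_{x_jx_k}$, expand, and sort by powers of $\lambda,\mu,\varphi,T$. Your treatment of the leading $\lambda^3\mu^4$ coefficient in $\mathcal{B}$ via the single product-rule identity $\partial_{x_k}(\varphi^3)=3\mu\psi_{x_k}\varphi^3$ is a slightly more compact packaging of the paper's split \eqref{5.16170123}--\eqref{5.15012}, and your only imprecision is that $-\mathcal{A}_t$ does not land entirely in the $\lambda^2T\varphi^3O(\mu^2e^{2\mu|\psi|_\infty})$ bucket (it also contributes $\lambda^2\varphi^2O(\mu^2)$ and $\lambda\varphi O(\mu^2)$ pieces), but those fit the remaining buckets in the statement, so the conclusion is unaffected.
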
 
\begin{proof}
For the estimate of $\mathcal{A}$: Noting that, we have
$$\mathcal{A}= \displaystyle\sum_{j,k=1}^N (a^{jk}l_{x_j}l_{x_k}-a^{jk}_{x_k}l_{x_j}+a^{jk}l_{x_jx_k})-l_t.$$
From \eqref{5.1201} and \eqref{5.1301}, it easy to see
\begin{align*}
\mathcal{A}=&\,\sum_{j,k=1}^N \big(\lambda^2\mu^2\varphi^2a^{jk}\psi_{x_j}\psi_{x_k}-\lambda\mu\varphi a^{jk}_{x_k}\psi_{x_j}+\lambda\mu^2\varphi a^{jk}\psi_{x_j}\psi_{x_k}+\lambda\mu\varphi a^{jk}\psi_{x_jx_k}\big)\\
&\,+\lambda T\varphi^2O(e^{2\mu\vert\psi\vert_\infty}).
\end{align*}
Then, we get for a large enough $\mu\geq C$ that
$$\mathcal{A}=\lambda^2\mu^2\varphi^2\sum_{j,k=1}^N a^{jk}\psi_{x_j}\psi_{x_k}+\lambda\varphi O(\mu^2)+\lambda T\varphi^2 O(e^{2\mu\vert\psi\vert_\infty}).$$

For the estimate of $\mathcal{B}$: We first have
$$\Psi=-2\sum_{j,k=1}^N a^{jk}l_{x_jx_k}=-2\sum_{j,k=1}^N a^{jk}(\lambda\mu^2\varphi\psi_{x_j}\psi_{x_k}+\lambda\mu\varphi\psi_{x_jx_k}).$$
Then, it is easy to see 
$$\Psi=-2\lambda\mu^2\varphi\sum_{j,k=1}^N a^{jk}\psi_{x_j}\psi_{x_k}+\lambda\varphi O(\mu),$$
and for all $j,k=1,2,...,N$
$$\Psi_{x_k}=-2\lambda\mu^3\varphi\sum_{j',k'=1}^N a^{j'k'}\psi_{x_{j'}}\psi_{x_{k'}}\psi_{x_k}+\lambda\varphi O(\mu^2),$$
$$\Psi_{x_jx_k}=-2\lambda\mu^4\varphi\sum_{j',k'=1}^Na^{j'k'}\psi_{x_{j'}}\psi_{x_{k'}}\psi_{x_j}\psi_{x_k}+\lambda\varphi O(\mu^3).$$
It follows that
\begin{align}
\begin{aligned}
\sum_{j,k=1}^N(a^{jk}\Psi_{x_k})_{x_j}&\,=\sum_{j,k=1}^N (a^{jk}_{x_j}\Psi_{x_k}+a^{jk}\Psi_{x_jx_k})\\
\label{5.141401}&\,\,=-2\lambda\mu^4\varphi\Bigg(\sum_{j,k=1}^N a^{jk}\psi_{x_j}\psi_{x_k}\Bigg)^2+\lambda\varphi O(\mu^3).
\end{aligned}
\end{align}
Then, for a large $\mu\geq C$, we obtain that
\begin{align}
\begin{aligned}
\mathcal{A}\Psi=&\,-2\lambda^3\mu^4\varphi^3\Bigg(\sum_{j,k=1}^N a^{jk}\psi_{x_j}\psi_{x_k}\Bigg)^2+\lambda^3\varphi^3O(\mu^3)+\lambda^2\varphi^2O(\mu^4)\\
\label{5.151501}&\,\,+\lambda^2T\varphi^3O(\mu^2e^{2\mu\vert\psi\vert_\infty}).
\end{aligned}
\end{align}
On the other hand, we have
\begin{align}\label{5.16170123}
\sum_{j,k=1}^N (\mathcal{A}a^{jk}l_{x_j})_{x_k}=\sum_{j,k=1}^N \big[\mathcal{A}_{x_k}a^{jk}l_{x_j}+\mathcal{A}\big(a^{jk}_{x_k}l_{x_j}+a^{jk}l_{x_jx_k}\big)\big].
\end{align}
It is easy to check for a large $\mu\geq C$ that
\begin{align}
\begin{aligned}
\sum_{j,k=1}^N \mathcal{A}_{x_k}a^{jk}l_{x_j}=&\,\,2\lambda^3\mu^4\varphi^3\Bigg(\sum_{j,k=1}^Na^{jk}\psi_{x_j}\psi_{x_k}\Bigg)^2+\lambda^2\varphi^2O(\mu^4)\\
\label{5.14015}&\,+\lambda^3\varphi^3O(\mu^3)+\lambda^2T\varphi^3O(\mu^2e^{2\mu\vert\psi\vert_\infty}),
\end{aligned}
\end{align}
and 
\begin{align}
\begin{aligned}
\sum_{j,k=1}^N \mathcal{A}\big(a^{jk}_{x_k}l_{x_j}+a^{jk}l_{x_jx_k}\big)=&\,\lambda^3\mu^4\varphi^3\Bigg(\sum_{j,k=1}^Na^{jk}\psi_{x_j}\psi_{x_k}\Bigg)^2+\lambda^3\varphi^3O(\mu^3)\\
\label{5.15012}&+\lambda^2\varphi^2O(\mu^4)+\lambda^2T\varphi^3O(\mu^2e^{2\mu\vert\psi\vert_\infty}).
\end{aligned}
\end{align}
From \eqref{5.16170123}, \eqref{5.14015} and \eqref{5.15012}, we deduce 
\begin{align} 
\begin{aligned}
\sum_{j,k=1}^N (\mathcal{A}a^{jk}l_{x_j})_{x_k}=&\,\,3\lambda^3\mu^4\varphi^3\Bigg(\sum_{j,k=1}^Na^{jk}\psi_{x_j}\psi_{x_k}\Bigg)^2+\lambda^2\varphi^2O(\mu^4)\\\label{5.181801}&
+\lambda^3\varphi^3O(\mu^3)+\lambda^2T\varphi^3O(\mu^2e^{2\mu\vert\psi\vert_\infty}).
\end{aligned}
\end{align}
Similarly, one can prove that 
\begin{align}
\begin{aligned}
\mathcal{A}_t&\,=\lambda^2\varphi^2O(\mu^2)+2\lambda^2\mu^2\varphi\varphi_t\sum_{j,k=1}^Na^{jk}\psi_{x_j}\psi_{x_k}+\lambda T\varphi^2 O(\mu^2)\\
\label{5.191901}&\quad\;+\lambda\varphi O(\mu^2)+\lambda T^2\varphi^3O(e^{2\mu\vert\psi\vert_\infty}).
\end{aligned}
\end{align}
By taking a large $\lambda\geq CT^2$ in \eqref{5.191901}, one absorbs the third term by using the second term, hence we get
\begin{align}
\mathcal{A}_t\label{5.191902}&\,=\lambda^2\varphi^2O(\mu^2)+\lambda^2T\varphi^3O(\mu^2)+\lambda\varphi O(\mu^2)+\lambda T^2\varphi^3O(e^{2\mu\vert\psi\vert_\infty}).
\end{align}
Combining \eqref{5.141401}, \eqref{5.151501}, \eqref{5.181801} and \eqref{5.191902}, we obtain
\begin{align}
\begin{aligned}
\mathcal{B}=&\,2\lambda^3\mu^4\varphi^3\Bigg(\sum_{j,k=1}^Na^{jk}\psi_{x_j}\psi_{x_k}\Bigg)^2+\lambda^3\varphi^3O(\mu^3)+\lambda^2\varphi^2O(\mu^4)\\
\label{5.1918012}&+\lambda^2T\varphi^3O(\mu^2e^{2\mu\vert\psi\vert_\infty})+\lambda T^2\varphi^3O(e^{2\mu\vert\psi\vert_\infty})+\lambda\varphi O(\mu^4).
\end{aligned}
\end{align}
Then \eqref{5.1918012} implies the estimate of $\mathcal{B}$.

For the estimate of $c^{jk}$: See that
$$c^{jk}=\sum_{j',k'=1}^N \Bigg[2a^{jk'}(a^{j'k}l_{x_{j'}})_{x_{k'}}-(a^{jk}a^{j'k'})_{x_{k'}}l_{x_{j'}}+a^{jk}a^{j'k'}l_{x_{j'}x_{k'}}\Bigg]+\frac{a^{jk}_t}{2}.$$
It follows
$$c^{jk}=\sum_{j',k'=1}^N \Bigg[2a^{jk'}a^{j'k}l_{x_{j'}x_{k'}}+ a^{jk} a^{j'k'} l_{x_{j'}x_{k'}}+2a^{jk'}a^{j'k}_{x_{k'}} l_{x_{j'}} -(a^{jk} a^{j'k'})_{x_{k'}} l_{x_{j'}}\Bigg]+\frac{a^{jk}_t}{2}.$$
Then for all $\xi=(\xi_1,...,\xi_N)\in\mathbb{R}^N$, we obtain
\begin{align*}
\sum_{j,k=1}^N c^{jk}\xi_j\xi_k&\,=2\lambda\mu^2\varphi\Bigg(\sum_{j,k=1}^N a^{jk}\psi_{x_j}\xi_{k}\Bigg)^2+\lambda\mu^2\varphi\Bigg(\sum_{j,k=1}^N a^{jk}\psi_{x_j}\psi_{x_k}\Bigg)\Bigg(\sum_{j,k=1}^N a^{jk}\xi_{j}\xi_{k}\Bigg)\\
&\,\,\;\;\;\;+\lambda\varphi\vert\xi\vert^2O(\mu)\\
&\,\,\geq [\beta^2\lambda\mu^2\varphi\vert\nabla\psi\vert^2+\lambda\varphi O(\mu)]\vert\xi\vert^2.
\end{align*}
This concludes the proof of Theorem \ref{thm5.2}.
\end{proof} 

We are now in a position to prove Lemma \ref{lm2.2}.
\begin{proof}[Proof of Lemma \ref{lm2.2}]
We borrow some ideas from the proof of \cite[Theorem 9.39]{lu2021mathematical}. Let $\psi$ be the function given by Lemma \ref{lm2.1} with $G_1$ being any fixed nonempty open subset of $G$ such that $G_1\Subset G_0$. Integrating the identity \eqref{5.121301} with $z$ being the solution of equation \eqref{202.3} (with $F\equiv0$), taking the expectation on both sides, we arrive at
\begin{align}\label{5.212103}
\begin{aligned}
&2\mathbb{E}\int_Q \sum_{j,k=1}^N c^{jk}h_{x_j}h_{x_k}dxdt+\mathbb{E}\int_Q\mathcal{B}h^2 dxdt\\
&\leq \mathbb{E}\int_Q \theta^2F_0^2 dxdt+\mathbb{E}\int_Q \theta^2\mathcal{A}Z^2 dxdt.
\end{aligned}
\end{align}
Now, using Theorem \ref{thm5.2}, we get for a large $\mu\geq C$ and $\lambda\geq C(T+T^2)$ 
\begin{align}
\begin{aligned}
&\,2\mathbb{E}\int_0^T\int_{G\setminus G_1} \sum_{j,k=1}^N c^{jk}h_{x_j}h_{x_k}dxdt+\mathbb{E}\int_0^T\int_{G\setminus G_1} \mathcal{B}h^2 dxdt\\
&\geq 2\beta^2\mathbb{E}\int_0^T\int_{G\setminus G_1} \Big[\big(\lambda\mu^2\varphi \min_{x\in G\setminus G_1}\vert\nabla\psi\vert^2+\lambda\varphi O(\mu)\big)\vert\nabla h\vert^2\\
&\quad+\big(\lambda^3\mu^4\varphi^3 \min_{x\in G\setminus G_1}\vert\nabla\psi\vert^4+\lambda^3\varphi^3O(\mu^3)+\lambda^2\varphi^2O(\mu^4)\\
\label{5.212201}&\quad+\lambda\varphi O(\mu^4)+\lambda^2T\varphi^3O(\mu^2e^{2\mu\vert\psi\vert_\infty})\big)h^2\Big] dxdt.
\end{aligned}
\end{align}
Then from \eqref{5.212201}, we have for a large $\mu\geq C$ and $\lambda\geq C(e^{2\mu\vert\psi\vert_\infty}T+T^2)$ that
\begin{align}
\begin{aligned}
&\,2\mathbb{E}\int_0^T\int_{G\setminus G_1} \sum_{j,k=1}^N c^{jk}h_{x_j}h_{x_k}dxdt+\mathbb{E}\int_0^T\int_{G\setminus G_1} \mathcal{B}h^2 dxdt\\
\label{5.232404}&\geq C\lambda\mu^2\mathbb{E}\int_0^T\int_{G\setminus G_1} \varphi(\vert\nabla h\vert^2+\lambda^2\mu^2\varphi^2h^2)dxdt.
\end{aligned}
\end{align}
It is easy to see that
\begin{equation}\label{5.242505}
\frac{1}{C}\theta^2(\vert\nabla z\vert^2+\lambda^2\mu^2\varphi^2z^2)\leq \vert\nabla h\vert^2+\lambda^2\mu^2\varphi^2h^2\leq C\theta^2(\vert\nabla z\vert^2+\lambda^2\mu^2\varphi^2z^2).
\end{equation}
From \eqref{5.212103}, \eqref{5.232404} and \eqref{5.242505}, we conclude that
\begin{align}
\begin{aligned}
&\,\lambda\mu^2\mathbb{E}\int_Q \theta^2\varphi(\vert\nabla z\vert^2+\lambda^2\mu^2\varphi^2z^2) dxdt\\
&=\lambda\mu^2\mathbb{E}\int_0^T\Bigg(\int_{G\setminus G_1} + \int_{G_1}\Bigg) \theta^2\varphi(\vert\nabla z\vert^2+\lambda^2\mu^2\varphi^2z^2) dxdt\\
&\,\leq C\Bigg[\mathbb{E}\int_Q \theta^2F_0^2 dxdt+\lambda^2\mu^2\mathbb{E}\int_Q \theta^2\varphi^2Z^2 dxdt\Bigg]\\
\label{5.212101}&\quad\;+ \lambda\mu^2\mathbb{E}\int_0^T\int_{G_1}\theta^2\varphi(\vert\nabla z\vert^2+\lambda^2\mu^2\varphi^2z^2) dxdt,
\end{aligned}
\end{align}
for $\mu\geq C$ and $\lambda\geq C(e^{2\mu\vert\psi\vert_\infty}T+T^2)$.

Now, to absorb the gradient term on the right-hand side of \eqref{5.212101}, we proceed exactly as in the proof of \cite[Theorem 9.39]{lu2021mathematical} by choosing a cut-off function $\zeta\in C^\infty_0(G_0;[0,1])$ with $\zeta\equiv1$ in $G_1$, then computing $d(\theta^2\varphi z^2)$ and integrating the equality on $Q$, we get
\begin{equation}\label{5.222201}
\mathbb{E}\int_0^T\int_{G_1}\theta^2\varphi\vert\nabla z\vert^2dxdt\leq C\mathbb{E}\int_{Q_0}\theta^2\Bigg(\frac{1}{\lambda^2\mu^2}F_0^2+\lambda^2\mu^2\varphi^3z^2\Bigg) dxdt.
\end{equation}
Finally, combining \eqref{5.212101} and \eqref{5.222201}, we deduce the desired Carleman inequality \eqref{2.061}. This concludes the proof of Lemma \ref{lm2.2}.
\end{proof}

\end{document}